\newtheorem{theorem}{Theorem}
\newtheorem{lemma}{Lemma}
\def\beq{ \begin{equation} }
\def\eeq{ \end{equation} }
\def\mn{\medskip\noindent}
\def\ms{\medskip}
\def\ep{\epsilon}
\def\square{\vcenter{\vbox{\hrule height .4pt
			\hbox{\vrule width .4pt height 5pt \kern 5pt
				\vrule width .4pt} \hrule height .4pt}}}
\def\RR{\mathbb{R}}
\def\ZZ{\mathbb{Z}}
\def\clearp{}
\def\P{{\mathbb P}}     
\def\E{{\mathbb E}}     
\def\<{{\langle}} 
\def\>{{\rangle}} 
\def\1{{\bf 1}}         
\begin{document}

\begin{frontmatter}
	
	\title{Genealogies in Expanding Populations}
	\runtitle{Genealogies in Expanding Populations}

	\begin{aug}
\author{\fnms{Rick} \snm{Durrett}\thanksref{t1}\ead[label=e1]{rtd@math.duke.edu}}
\and
\author{\fnms{Wai-Tong (Louis)} \snm{Fan}\thanksref{t2}\ead[label=e2]{ariesfanhk@gmail.com}}
		
	\thankstext{t1}{Partially supported by NSF grant DMS 1305997 from the probability program.}
	\thankstext{t2}{Partially supported by  Army Research Office W911NF-14-1-0331.}
		
		\runauthor{Rick Durrett and Wai-Tong (Louis) Fan}
		
		\affiliation{Duke University and University of North Carolina at Chapel Hill}
		
		\address{Department of Mathematics, Box 90320 \\
			Duke U., Durham, NC 27708-0320\\
			\printead{e1}\\
			}
		\address{Department of Statistics and Operations Research \\
			UNC, Chapel Hill, NC 27708-0320\\
			\printead{e2}\\
			}
	\end{aug}

	\begin{abstract}
The goal of this paper is to prove rigorous results for the behavior of genealogies in a one-dimensional long range biased voter model introduced by 
Hallatschek and Nelson \cite{HN08}. The first step, which is easily accomplished using results of Mueller and Tribe \cite{MT95}, 
is to show that when space and time are rescaled correctly, our biased voter model converges to a Wright-Fisher SPDE. A simple extension of 
a result of Durrett and Restrepo \cite{DurRep} then shows that the dual branching coalescing random walk 
converges to a branching Brownian motion in which particles
coalesce after an exponentially distributed amount of intersection local time. Brunet et al. \cite{BDMM} have conjectured that genealogies 
in models of this type are described by the Bolthausen-Sznitman coalescent, see \cite{NH13}. However, in the model we study there are no 
simultaneous coalescences. Our third and most significant result concerns ``tracer dynamics'' in which some of the initial particles 
in the biased voter model are
labeled. We show that the joint distribution of the labeled and unlabeled particles converges to 
the solution of a system of stochastic partial differential 
equations. A new duality equation that generalizes the one Shiga \cite{Shiga88} developed for the Wright-Fisher SPDE is the key to the proof of that result.
	\end{abstract}
	
	\begin{keyword}[class=MSC]
		\kwd[Primary ]{60K35, 60H15}
		\kwd[; secondary ]{92C50.}
	\end{keyword}
	
	\begin{keyword}
		\kwd{Biased voter model}
		\kwd{SPDE}
		\kwd{Duality}
		\kwd{Scaling limit}
		\kwd{Genealogies.}
	\end{keyword}
	
\end{frontmatter}

\section{Introduction}

Since the 1970s the dominant viewpoint in cancer modeling has been that tumors evolve through clonal succession, i.e., there is a sequence of mutations advantageous for cell growth that sweep to fixation. However, recent work of Sottoriva et al.~\cite{BigBang} has shown that in colon cancer most of common mutations in a tumor were present at initiation, while mutations that arise later are only present in small regions of the tumor. The long range goal of our research is to understand the behavior of genealogies in a growing tumor and the resulting patterns of genetic heterogeneity. This is important because cancer treatment may fail if one does not have an accurate knowledge of the mutational types present in the tumor. 

The genetic forces at work in a growing tumor are very similar to those in a population expanding into a new geographical area. In this case most of the advantageous mutation occur near the front, see Edmonds, Lille, and Cvalli-Sforza \cite{ELCS},
and simulation studies of a spatial model in \cite{KCE}. There are a half dozen papers by Oscar Hallatschek and collaborators \cite{HHRN07, HN08, HN10, Korolev, LHN12, NH13} using nonrigorous arguments to study genealogies in an expanding population. A rigorous analysis in dimensions $d \ge 2$ seems difficult, so we will restrict our attention here to a one dimensional model closely related to one introduced by Hallatshcek and Nelson \cite{HN08}. Our version of their model is a sequence of biased voter models on
$$
\Lambda_n:=(L_n^{-1}\ZZ) \times \{1, \ldots, M_n \}.
$$
There is one cell at each point of $\Lambda_n$, whose cell-type is either 1 or 0. The cells in deme $w \in L_n^{-1}\ZZ$ only interact with those in demes $w-L_n^{-1}$ and $w + L_n^{-1}$. Hence each cell $x=(w,i)$ has $2M_n$ neighbors. Type-0 cells reproduce at rate $2M_nr_n$, type-1 cell at rate $2M_n(r_n+\theta R_n^{-1})$ where $\theta\in[0,\infty)$. When reproduction occurs the offspring replaces a neighbor chosen uniformly at random. In the terminology of evolutionary games, this is birth-death updating.

\subsection{Interacting particle system duality}

Let $\xi_t(x)$ be the type of the cell at $x$ at time $t$. Our (rescaled) biased voter model  $(\xi_t)_{t\geq 0}$  can be constructed using  two independent families of  i.i.d.~Poisson processes: $\{P^{x,y}_t:\;x\sim y\}$ that have rate $r_n$, and $\{ \tilde P^{x,y}_t:\;x\sim y\}$, that have rate $\theta R_n^{-1}$. 
At a jump time of $P^{x,y}_t$, the cell at $x$ is replaced by an offspring of the one at $y$. At a jump time of $\tilde P^{x,y}_t$, the cell at $x$ is replaced by an offspring of the one at $y$ only if $y$ has cell-type 1. 
To avoid clutter we have suppressed the superscript $n$'s on $\xi_t$ and the two Poisson processes.

To formally construct the process and define a useful dual process, we recall the graphical representation of the biased voter model introduced by Harris (1976) and developed by Griffeath (1978). The ingredients are arrows that spread fluid in the direction of their orientation, and $\delta$'s which are dams that stop fluid.
At times $s$ of $\tilde P_{x,y}$ we draw an arrow from $(s,y) \to (s,x)$. At times $s$ of $P_{x,y}$ we draw an arrow from $(s,y) \to (s,x)$
and put a $\delta$ at $(s-,x)$. The $s-$ indicates that the $\delta$ is placed just before the head of the arrow. Intuitively, we inject fluid into the bottom of the graphical representation at the sites of the configuration that
are 1 and let it {\it flow up}.  A site $x$ is in state 1 at time $t$ if and only if it can be reached by fluid.
If there is fluid at $y$ at time $s$ and an arrow (with no $\delta$) from $(s,y) \to (s,x)$, the fluid will spread to $x$, i.e., there is a birth at $x$
if it is in state 0. If $x$ is already occupied no change occurs.

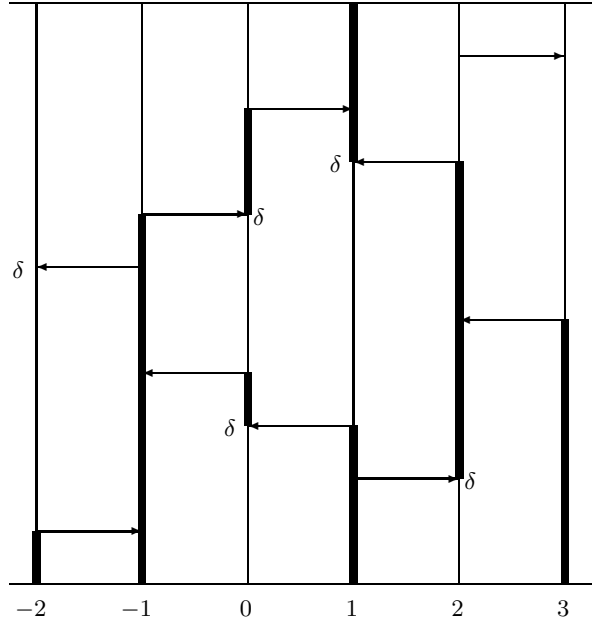
\begin{figure}[h]
\begin{center}
\begin{picture}(280,280)
\put(30,30){\line(1,0){220}}
\put(30,250){\line(1,0){220}}
\put(40,30){\line(0,1){220}}
\put(80,30){\line(0,1){220}}
\put(120,30){\line(0,1){220}}
\put(160,30){\line(0,1){220}}
\put(200,30){\line(0,1){220}}
\put(240,30){\line(0,1){220}}
\put(32,18){$-2$}
\put(72,18){$-1$}
\put(117,18){0}
\put(157,18){1}
\put(197,18){2}
\put(237,18){3}
\put(40,50){\vector(1,0){40}}
\put(160,70){\vector(1,0){40}}
\put(160,90){\vector(-1,0){40}}
\put(120,110){\vector(-1,0){40}}
\put(240,130){\vector(-1,0){40}}
\put(80,150){\vector(-1,0){40}}
\put(80,170){\vector(1,0){40}}
\put(200,190){\vector(-1,0){40}}
\put(120,210){\vector(1,0){40}}
\put(200,230){\vector(1,0){40}}
\put(151,186){$\delta$}
\put(122,166){$\delta$}
\put(31,146){$\delta$}
\put(111,86){$\delta$}
\put(202,66){$\delta$}
\linethickness{1.0mm}
\put(160,250){\line(0,-1){60}}
\put(200,190){\line(0,-1){120}}
\put(240,130){\line(0,-1){100}}
\put(160,90){\line(0,-1){60}}
\put(120,210){\line(0,-1){40}}
\put(80,170){\line(0,-1){140}}
\put(40,50){\line(0,-1){20}}
\put(120,110){\line(0,-1){20}}
\end{picture}
\end{center}
\caption{Duality for the biased voter model.}
\label{fig:dual}
\end{figure}

If there is an arrow-$\delta$ from $(s,y) \to (s,x)$, then a little thought reveals

\begin{center}
\begin{tabular}{lcclc}
\multicolumn{2}{c}{before} & \qquad & \multicolumn{2}{c}{after} \\
{\it y} & {\it x} && {\it y} & {\it x} \\
1 & 0 && 1 & 1 \\
0 & 1 && 0 & 0 \\
1 & 1 && 1 & 1 \\
0 & 0 && 0 & 0 
\end{tabular}
\end{center}

\noindent
In the first case the fluid spreads from $y$ to $x$ as if there was no $\delta$. In the second, there is no fluid to be spread but the dam stops
the fluid at $x$. In the third the dam stops the fluid at $x$, but it replaced by fluid from $y$. In the fourth, there is no 
fluid so nothing happens. Thus the effect in all cases is that $x$ imitates $y$.

To define the dual, we inject fluid at $x$ at time $t$ and let it {\it flow down}. It is again stopped by dams but now moves across
arrows in the direction OPPOSITE to their orientation. Given $z\in \Lambda_n=(L^{-1}_n\ZZ) \times \{1, \ldots, M_n \}$, we define the dual process $\zeta^{t,z}_s$, $0\le s \le t$ 
to be the set of sites at time $t-s$ that can be reached by fluid starting at $z$. A little thought shows that 
$\zeta^{t,z}_0=\{z\}$ and follows the following rules:

\begin{itemize} 
  \item If a particle in $\zeta^{t,z}_s$ is at $x$ and an arrival in $P^{x,y}$ occurs at time $t-s$ then the particle jumps to $y$.
  \item If a particle in $\zeta^{t,z}_s$ is at $x$ and an arrival in $\tilde P^{x,y}$ occurs at time $t-s$ then the particle gives birth to a new particle at $y$.
  \item If a jumping particle or an offspring lands on another particle in $\zeta^{t,z}_s$, then the two particles coalesce to 1.
\end{itemize} 

\noindent
For a picture see Figure \ref{fig:dual}. There the dark lines give the occupied sites in $\zeta^{t,1}_s$. At the end $\zeta^{t,1}_t = \{ -2, -1, 1, 3 \}$.

To extend the definition to a collection of sites $A$, we let $\zeta^{t,A}_s = \cup_{z\in A} \zeta^{t,z}_s$. 
We defined our dual process starting at a fixed time $t$ so that the relationship in Lemma \ref{duality}
holds with probability 1. If we have two times $t<t'$ then the distributions of  $\zeta^{t,A}_s$ and  $\zeta^{t',A}_s$ agree up to time $t$,
so the Kolmogorov extension theorem implies that we can define a process $\zeta^A_s$ {\it for all time} so that the distribution agrees with
$\zeta^{t,A}_s$ up to time $t$. 
The collection of processes $\{\zeta^A:\,A\subset \Lambda_n\}$ is referred to as the {\bf dual process of $\xi$}.

It follows from the definition that 
\begin{lemma} \label{duality}
$\xi_t(z)=1$ for some $z\in A$ if and only if $\xi_0(x) = 1$ for some $x \in \zeta^{t,A}_t$.
\end{lemma}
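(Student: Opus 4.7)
The plan is to read both $\xi_t$ and $\zeta^{t,A}_t$ off the same graphical representation and to identify each side of the lemma with the existence of an appropriate space-time path. Call a cadlag map $\gamma:[0,t]\to\Lambda_n$ an \emph{upward active path} from $(0,x)$ to $(t,z)$ if $\gamma(0)=x$, $\gamma(t)=z$, every jump of $\gamma$ occurs across an arrow of $P$ or $\tilde P$ in the direction of its orientation, and $\gamma$ is not killed by a dam, where a $\delta$ attached to a $P^{y,w}$-arrow at time $s$ kills a path sitting at $w$ across time $s$ unless the path leaves $w$ along the attached arrow. The four-row table in the construction is precisely the statement that fluid reaches $(s,w)$ iff some upward active path from $(0,x_0)$ with $\xi_0(x_0)=1$ reaches $(s,w)$, so inducting over the finitely many Poisson events in a suitable window gives $\xi_t(z)=1$ iff such a path ends at $(t,z)$.

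Second, I would show that $\zeta^{t,z}_t$ is exactly the set of starting sites of these paths. The three bullet points defining the dual are nothing more than the instruction to walk the same arrows backwards from $(t,z)$: a $P^{x,y}$-arrow-with-dam is traversed as a forced jump and a $\tilde P^{x,y}$-arrow as a branching event, because in the forward picture the former determines the state at $x$ entirely from $y$ while the latter allows either $x$'s own prior state or a $1$ at $y$ to produce a $1$ at $x$. Coalescence on contact only removes duplicate labels and does not alter the set of sites visited, so
\[
\zeta^{t,z}_t = \{x\in\Lambda_n : \text{there is an upward active path from } (0,x) \text{ to } (t,z)\}.
\]

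Combining these two identifications proves the lemma for the singleton $A=\{z\}$, and the general case follows from $\zeta^{t,A}_t=\bigcup_{z\in A}\zeta^{t,z}_t$ together with the monotonicity of the forward condition in $A$. I do not expect a serious obstacle, because the four-row table was written down precisely so that the forward and backward prescriptions agree at every arrow-dam pair; the step requiring the most care is verifying that the ``fluid absent at $y$'' entry in the table corresponds correctly to the dual rule of jumping into an unoccupied initial site, but this is an immediate case check rather than a genuine difficulty.
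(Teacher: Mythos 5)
Your proposal is correct and is essentially the argument the paper intends when it says the lemma ``follows from the definition'': both $\xi_t(z)=1$ and $x\in\zeta^{t,z}_t$ are characterized as the existence of the same active space--time path in the common graphical representation, with the four-row table guaranteeing that the arrow-$\delta$ rule reads identically forward and backward. The only blemish is notational: with the paper's convention the $\delta$ sits at the head of the arrow, so an upward path at the dam site can never ``leave along the attached arrow'' and your unless-clause is vacuous (the correct and intended rule being that the path survives the dam only by arriving there via the arrow); this does not affect the validity of the argument.
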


\subsection{Measure valued limit for the biased voter model}

We define the approximate density by
$$
u^n_t(w)  := \frac{1}{M_n} \sum_{i=1}^{M_n} \xi_t(w,i)
$$
and linearly interpolate between demes to define $u^n_t(w)$ for all $w\in\RR$. We retain the superscript $n$ on $u^n_t$ to be able to distinguish the approximating process from its limit. It is clear that for all $t\geq 0$, we have $0\leq u^n_t(w)\leq 1$ for all $w\in \RR$ and $u^n_t\in C_b(\RR)$,  the set of bounded continuous functions on $\RR$. If we equip $C_b(\RR)$ with the metric
\beq
\|f\| = \sum_{k=1}^{\infty} 2^{-k} \sup_{|x|\leq k}|f(x)|
\label{normC}
\eeq
i.e., uniform convergence on compact sets, then $C_b(\RR)$ is Polish and the paths $t\mapsto u^n_t$ are $C_b(\RR)$ valued and c\`adl\`ag. 

\begin{theorem}\label{mvLimit}
Suppose that as $n\to\infty$, the initial condition $u_0^n$ converges in $C_b(\RR)$ to $f_0$ and that:
\begin{enumerate}
	\item[(a)] $\alpha_n \equiv r_n M_n/L_n^2 \to \alpha \in (0,\infty)$
	\item[(b)] $\beta_n \equiv M_n/R_n \to \beta	\in[0,\infty)$
	\item[(c)] $\gamma_n \equiv r_n/L_n \to \gamma	\in[0,\infty)$ 
	\item[(d)] $L_n  \to \infty$ and $L_nR_n \to\infty$
\end{enumerate}
Then the approximate density process $(u_t^n)_{t\geq 0}$ converges in distribution in $D([0,\infty),\,C_b(\RR))$, as $n\to\infty$, to a continuous $C_b(\RR)$ valued process $(u_t)_{t\geq 0}$ which is the weak solution to the (stochastic) partial differential equation
\begin{equation}\label{uSPDE}
\partial_t u = \alpha\,\Delta u + 2\theta \,\beta\, u(1-u) + |4\gamma \,u(1-u)|^{1/2} \, \dot{W}
\end{equation}
with initial condition $u_0=f_0$. Here $ \dot{W}$ is the  space-time white noise on $[0,\infty)\times \RR$.
\end{theorem}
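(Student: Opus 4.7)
\medskip\noindent\textbf{Proof proposal.} The plan is to carry out the standard scaling-limit recipe for long-range voter-type systems developed by Mueller--Tribe \cite{MT95}, with the selection term entering as a bounded Lipschitz perturbation, and then invoke a duality-based uniqueness argument for the limiting SPDE.

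\emph{Step 1 (Semimartingale decomposition).} For each $\varphi\in C_c^\infty(\RR)$, I compute the generator $\mathcal L_n$ of $\xi_t$ applied to $\langle u^n_t,\varphi\rangle:=\int u^n_t(w)\varphi(w)\,dw$. Using the prescribed rates, a direct calculation gives
\[
\mathcal L_n\langle u^n,\varphi\rangle = \alpha_n\,\langle u^n,\Delta_n\varphi\rangle + \theta\beta_n\,\big\langle (u^n(\cdot-L_n^{-1})+u^n(\cdot+L_n^{-1}))(1-u^n),\varphi\big\rangle,
\]
where $\Delta_n\varphi(w):=L_n^{2}[\varphi(w+L_n^{-1})+\varphi(w-L_n^{-1})-2\varphi(w)]$. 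Conditions (a), (b), (d) give $\alpha_n\Delta_n\varphi\to\alpha\Delta\varphi$ uniformly and $\theta\beta_n\to\theta\beta$, so the drift converges. The compensated process $M^n_t(\varphi):=\langle u^n_t-u^n_0,\varphi\rangle-\int_0^t\mathcal L_n\langle u^n_s,\varphi\rangle\,ds$ is a square-integrable martingale. Since a flip at $(w,i)$ changes $u^n(w)$ by $\pm M_n^{-1}$ and has per-deme summed rate $\sim 4\,r_nM_n^{2}\,u^n(w)(1-u^n(w))$ after the local approximation $u^n(w\pm L_n^{-1})\approx u^n(w)$, one obtains
\[
\langle M^n(\varphi)\rangle_t \;=\; 4\gamma_n\!\int_0^t\!\!\int\varphi(w)^{2}\,u^n_s(w)(1-u^n_s(w))\,dw\,ds + o(1),
\]
with $4\gamma_n\to 4\gamma$ by (c) and individual jumps of size $O(\|\varphi\|_\infty/(M_nL_n))\to 0$, forcing any limit to be continuous.

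\emph{Step 2 (Tightness in $D([0,\infty),C_b(\RR))$).} Step 1 plus Aldous' criterion yields tightness of the real-valued pairings $(\langle u^n,\varphi\rangle)$ for each fixed $\varphi\in C_c^\infty$. Since $0\le u^n_t(w)\le 1$, the sequence is bounded in the metric \eqref{normC}. To upgrade to tightness of $u^n$ in $D([0,\infty),C_b(\RR))$ under the compact-open topology, I would write $u^n_t$ in its discrete mild form (initial data convolved with the discrete heat semigroup associated to $\alpha_n\Delta_n$, plus drift and noise stochastic integrals) and invoke the heat-kernel moment estimates of Mueller--Tribe \cite{MT95} to control the spatial modulus of continuity of $u^n_t$ on each compact interval uniformly in $n$ and in $t\in[0,T]$, using $u^n(1-u^n)\le 1/4$ throughout. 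The selection drift is bounded and Lipschitz in $u$, so it contributes only a Gronwall term.

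\emph{Step 3 (Identification and uniqueness).} For any subsequential limit $u$, pass to the limit in the decomposition of Step 1 using uniform convergence of $\alpha_n\Delta_n\varphi\to\alpha\Delta\varphi$ and bounded convergence of $u^n_s(1-u^n_s)\to u_s(1-u_s)$ on the support of $\varphi$. This shows that for every $\varphi\in C_c^\infty(\RR)$,
\[
M_t(\varphi):=\langle u_t,\varphi\rangle-\langle f_0,\varphi\rangle-\int_0^t\langle u_s,\alpha\Delta\varphi\rangle ds-\int_0^t\langle 2\theta\beta\,u_s(1-u_s),\varphi\rangle ds
\]
is a continuous martingale with $\langle M(\varphi)\rangle_t=\int_0^t\!\int 4\gamma\,u_s(1-u_s)\,\varphi^{2}\,dw\,ds$, which is Walsh's weak-SPDE formulation of \eqref{uSPDE}. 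Uniqueness of the martingale problem comes from Shiga's duality \cite{Shiga88} for the Wright--Fisher SPDE, augmented with branching to accommodate the selection term (this is precisely the branching-coalescing dual that is analyzed later in the paper). Uniqueness together with tightness upgrades subsequential convergence to full convergence.

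The main technical obstacle is \emph{Step 2}: upgrading tightness of fixed pairings to tightness of $u^n$ as a $C_b(\RR)$-valued c\`adl\`ag process. This requires uniform-in-$n$ control of the spatial oscillations of $u^n_t$ over every compact window, which is exactly what the heat-kernel moment estimates of \cite{MT95} deliver; the rate conditions (a)--(d) are calibrated so that their argument applies verbatim, with the selection term handled as a routine Lipschitz perturbation.
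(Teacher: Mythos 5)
Your proposal follows essentially the same route as the paper: a semimartingale/approximate-martingale-problem decomposition with the drift, quadratic variation, and discrete Laplacian identified exactly as in Section \ref{sec:AMP}, tightness via the Green's function (mild) representation and the Mueller--Tribe heat-kernel moment estimates as in Sections 4--7, and uniqueness of the limit via Shiga-type moment duality with the branching-coalescing Brownian dual as in Section \ref{sec:unique}. Your identification of Step 2 as the main technical burden, and of the discrete mild form plus kernel estimates as the tool to control the spatial modulus uniformly in $n$, matches precisely how the paper discharges it (Lemmas \ref{L:Moment_hatu} and \ref{L:Moment_tildeu}).
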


\noindent
Theorem \ref{mvLimit} is a straight forward generalization of a result of Mueller and Tribe \cite{MT95}. They considered a long range voter model on $\ZZ/n$ in which two voters are neighbors if $|x-y| \le n^{1/2}$. Their voters change their opinion at rate $O(n)$ and imitate the opinion of a neighbor chosen at random. More precisely,
for each of the $2n^{1/2}$ neighbors, they adopt the opinion of that neighbor at rate $n^{1/2}$ if it is 0
and at rate $n^{1/2} + \theta n^{-1/2}$ if it is 1. Their model corresponds roughly in our situation to $L_n = M_n = R_n = r_n = n^{1/2}$. Their limit is
$$
\partial_t u = \frac{1}{6} \,\Delta u + 2\theta \, u(1-u) + |4\,u(1-u)|^{1/2} \,  \dot{W}.
$$
$\beta=\gamma=1$ while the 1/6 comes from the fact that the variance of the uniform distribution on $[-1,1]$ is 1/3.

\medskip
{\it Why is Theorem \ref{mvLimit} true?} If this discussion becomes confusing the reader can skip it. In Section \ref{sec:AMP} the analysis of the approximate martingale problem will give a more rigorous version of this explanation. 

\begin{itemize}
  \item 
To begin we note that only branching events change the expected number of ones. They occur from each site at rate $2\theta M_n/R_n$. Birth events from $x$ to $y\sim x$ with $\xi_t(x)=1$ and $\xi_t(y)=0$ will change the local density of 1's by $1/M_nL_n$. In an interval of length $h$ there are $hL_nM_n$ possibilities for $x$,  so using the assumption $M_n/R_n \to \beta$ gives the drift term $\approx 2\theta \,\beta\, u(1-u)$. 

\item
Under our assumptions, branching events are much less frequent than voting events ($r_n/L_n \to \gamma$ and $L_nR_n\to\infty$ imply $r_n/R_n^{-1} \to\infty$) so for the rest of the computation argument they can be ignored. Voting events occur from each site at rate $2 r_nM_n$. If they go from $x$ to $y\sim x$ with $\xi_t(x) \neq \xi_t(y)$ then the local density of 1's will change by $\pm 1/M_nL_n$. In an interval of length $h$ there are $hL_nM_n$ possibilities for $x$, so using the assumption $r_n/L_n \to \gamma$ the infinitesimal variance term is $\approx 4\gamma u(1-u)$. 

\item
To see the source of the Laplacian note that in the dual particles jump at rate $2 r_n M_n$ by an amount that $\pm 1/L_n$ with equal probability
so the genealogies converge to Brownian motions with variance $2\alpha t$. Recalling the 1/2 in the generator of Brownian motion we have the term $\alpha\Delta u$. 
\end{itemize}

\subsection{Convergence of the dual to Branching Brownian motions}  

Durrett and Restrepo \cite{DurRep} have earlier considered a related coalsescing random walk (with no births) on $\ZZ$. We use their notation even though it conflicts slightly from ours. There are $M$ haploid individuals at each site and nearest neighbor migration (i.e., jumps $\pm 1$ with equal probability) occur at rate $\nu$. Theorem 2 in \cite{DurRep} states the following.

\begin{theorem} \label{DR}
Sample one individual from the colony at 0 and another one from the colony at $L$. Let $t_0$ be the time at which they coalesce.
If $L\to\infty$ and $M\nu/L \to \alpha\in (0,\infty)$ then $2t_0/(L^2/\nu)$ converges in distribution to $\ell_0^{-1}(\alpha\tau)$ where $\ell_0$ is the local time at 0 of a standard Brownian motion started from $1$ and $\tau$ is an independent mean 1 exponential random variable.
\end{theorem}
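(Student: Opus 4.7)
\emph{Proof proposal.} The goal is to represent the coalescence time as a first-passage functional of two Brownian motions and to show that the cumulative coalescence hazard, after rescaling, converges to a constant multiple of the local time at $0$ of the difference process, with that constant matching $\alpha$.

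\medskip
\noindent
Sampling the two lineages and tracing them backwards in time, they form a pair of independent continuous-time nearest-neighbour random walks $X_s,Y_s$ on $\ZZ$ of rate $\nu$, with $X_0=0$ and $Y_0=L$. While $X_s\ne Y_s$ no coalescence can occur; while $X_s=Y_s$ they coalesce at a fixed rate $\rho$ dictated by the within-colony replacement rule (so that $\rho L/\nu$ admits a positive finite limit under the hypotheses). Conditional on the paths,
$$
t_0 \;=\; \inf\!\left\{t\ge 0\,:\,\rho\int_0^t \1_{\{X_s=Y_s\}}\,ds\,\ge\,\tau\right\},\qquad \tau\sim\mathrm{Exp}(1)\ \text{independent.}
$$

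\medskip
\noindent
Set $\widetilde X^L_s=X_{sL^2/\nu}/L$ and $\widetilde Y^L_s=Y_{sL^2/\nu}/L$. Donsker's invariance principle gives $(\widetilde X^L,\widetilde Y^L)\Rightarrow (B^1,B^2)$ in $D([0,\infty),\RR^2)$, with $B^1,B^2$ independent standard Brownian motions from $0$ and $1$. The key step is to upgrade this to joint convergence with the diagonal-occupation-time functional: by a local-time CLT for random walks (Borodin's theorem, or the semimartingale approach via Tanaka's formula applied to $|X-Y|$),
$$
\frac{\nu}{L}\!\int_0^{sL^2/\nu}\!\1_{\{X_u=Y_u\}}\,du \;\Longrightarrow\; c_0\,\ell^{B^1-B^2}_0(s)
$$
jointly with the path convergence, where $\ell^{B^1-B^2}_0$ is the semimartingale local time at $0$ of $B^1-B^2$ (a variance-$2$ Brownian motion from $-1$) and $c_0$ is an explicit normalising constant. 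A Brownian scaling/time-change identity then re-expresses $\ell^{B^1-B^2}_0(s)$ as the local time at $0$ of a standard Brownian motion from $1$ evaluated at time $2s$, which is the source of the factor $2$ in the statement.

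\medskip
\noindent
Multiplying by $\rho$ and using $M\nu/L\to\alpha$ together with the calibration of $\rho L/\nu$, all intermediate constants cancel so that the cumulative coalescence hazard converges to $\alpha\,\ell_0(2s)$, with $\ell_0$ the local time at $0$ of a standard Brownian motion from $1$. The continuous-mapping theorem applied to the first-passage map $G\mapsto \inf\{s:G(s)\ge \tau\}$ (continuous at paths that strictly increase through $\tau$) yields
$$
\frac{2t_0}{L^2/\nu}\;\Longrightarrow\;\ell_0^{-1}(\alpha\tau),
$$
as claimed. The main obstacle is this middle step: obtaining the joint convergence of the pair of paths together with the occupation-time functional to the semimartingale local time at $0$, and tracking all the constants (variance doubling of $B^1-B^2$, Brownian scaling of local time, and the intrinsic normalisation $c_0$ of the random-walk local-time CLT) so that the limit is exactly $\ell_0^{-1}(\alpha\tau)$ and not a rescaled version.
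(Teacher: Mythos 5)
Your proposal follows essentially the same route as the paper: the paper's Section~\ref{sec:PfTh3} also reduces the problem to the difference walk, converts the rescaled occupation time at $0$ into Brownian local time, and triggers coalescence when that clock passes an independent exponential level (realized there as probability $1/M$ of coalescing per visit to $0$). The one step you flag as the main obstacle --- joint convergence of the occupation-time functional to local time --- is exactly what the paper supplies, citing Borodin and giving a self-contained argument via the discrete Tanaka martingale $|S^n_t|-V^n_t$, Aldous' tightness criterion, and uniqueness of the Doob--Meyer decomposition, which is the same semimartingale route you sketch.
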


\noindent
{\it Why is this true?} Since migration occurs at rate $\nu$, it tales time of order $L^2/\nu$ to move a distance $O(L)$.
In this time the differencesw in the location of the two particles will visit each integer between 0 and $L$ of order
$L/\nu$ times. If we want coalescence to have a probability strictrly between 0 and 1, we need $M\nu/L \to \alpha\in (0,\infty)$.

\medskip
The proof of Theorem \ref{DR} generalizes easily to give:

\begin{theorem} \label{BCRWlim}	Suppose that as $n\to\infty$, the conditions on $r_n$, $R_n$, $M_n$ and $L_n$ in Theorem \ref{mvLimit} hold. Then the dual process $\zeta$ of $\xi$ converges in distribution to a limit in which particles move according to
Brownian motions on the real line $\RR$ with variance $2\alpha t$, branching occurs at rate $\theta\beta$ and two particles coalesce when the local time at 0 of the difference between their locations  exceeds $\alpha\tau/\gamma$, where $\tau$ is a mean one exponential random variable independent of the particle motions. If $\gamma=0$ there is no coalescence.
\end{theorem}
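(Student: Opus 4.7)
\noindent
\emph{Proof sketch.} The dual $\zeta^n$ is a continuous-time branching coalescing random walk on $\Lambda_n$, and the plan is to identify separately the scaling limits of (i) each particle's spatial trajectory, (ii) the branching events, and (iii) the pairwise coalescence mechanism, and then to combine them via tightness in Skorokhod path space and convergence of finite-dimensional distributions. The backbone is Theorem \ref{DR}; what is new is the presence of branching and the fact that each deme contains $M_n$ cells, so coalescence happens only when the two particles occupy adjacent demes.

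Projected on the first coordinate, a single particle performs a symmetric continuous-time random walk on $L_n^{-1}\ZZ$ with jumps $\pm L_n^{-1}$ at total rate $2M_n r_n$, so its variance rate $2\alpha_n\to 2\alpha$ and Donsker's invariance principle gives convergence to a Brownian motion of variance $2\alpha t$. Each particle's branching times form an independent Poisson process with rate $2M_n\theta/R_n = 2\theta\beta_n\to 2\theta\beta$; the offspring is displaced by at most $L_n^{-1}\to 0$ in the first coordinate, so in the limit offspring are born at the parent's position, and independence across distinct particles is inherited from the independence of the $\tilde{P}^{x,y}$.

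The main step is the coalescence analysis for a pair of particles, which adapts the local-time argument of \cite{DurRep}. Track the first-coordinate difference $D^n_t\in L_n^{-1}\ZZ$; it is a symmetric random walk with variance rate $4\alpha_n\to 4\alpha$ whose rescaled paths converge to a Brownian motion $B$ of variance $4\alpha t$. Two cells in the same deme are not neighbours, so coalescence is impossible when $D^n_t=0$; when $|D^n_t|=L_n^{-1}$ the particles occupy adjacent demes and either one can jump onto the other at rate $r_n$, giving a coalescence rate of $2r_n$. With $\tau$ an independent $\mathrm{Exp}(1)$ variable the discrete coalescence time is therefore
\beq
T^n_c=\inf\Bigl\{t>0:\;2r_n\int_0^t \1_{|D^n_s|=L_n^{-1}}\,ds>\tau\Bigr\}.
\eeq
An occupation-time invariance principle for recurrent random walks shows that $L_n\int_0^t\1_{D^n_s=k L_n^{-1}}\,ds$ converges in probability, for each fixed $k$, to the local time at $0$ of $B$. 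Combining with the hypothesis $r_n/L_n\to\gamma$, the accumulated rate converges to a positive multiple of $\gamma L^B_t(0)$, and matching constants against the variance normalisation of $B$ yields the threshold $\alpha\tau/\gamma$ claimed in the theorem. When $\gamma=0$ the accumulated rate vanishes and no pair ever coalesces.

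The central technical obstacle is the occupation-time step: the discrete process concentrates mass on $L_n^{-1}\ZZ$ while the limit's local time is a continuous-state object. I would handle this by sandwiching the indicator $\1_{\{0\}}$ between smooth bump functions of width $\varepsilon$, applying the occupation density formula to both $D^n$ and $B$, and interchanging $n\to\infty$ with $\varepsilon\to 0$ via a tightness estimate on occupation measures. A secondary issue is to verify pairwise independence in the limit among the particles existing at any time: since the branching rate is bounded, only finitely many new pairs are created in any finite window, and the same local-time analysis applies to each pair independently.
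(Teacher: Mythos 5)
Your proposal is correct and its architecture coincides with the paper's Section~2 proof: reduce to the deme-level difference walk of a pair of dual particles, prove a single-site occupation-time/local-time invariance principle, and treat coalescence as a $1/M_n$-thinning of encounters --- which in your formulation is an exponential clock run at rate $2r_n$ while the particles occupy adjacent demes. Your constants do check out: $2r_n/L_n \to 2\gamma$ times the occupation density $2\ell_0(t)/(4\alpha)$ of the two sites $\pm L_n^{-1}$ gives $(\gamma/\alpha)\ell_0(t)$, hence the threshold $\alpha\tau/\gamma$. The one place you genuinely diverge is the proof of the local-time convergence itself: you propose sandwiching the single-site indicator between bump functions and interchanging $n\to\infty$ with $\varepsilon\to 0$, which is workable but delicate since the lattice spacing $L_n^{-1}$ and the bump width must be decoupled. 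The paper instead uses a discrete Tanaka identity --- $|S^n_t|-V^n_t$ with $V^n_t=(4r_nM_n/L_n)\int_0^t \1_{(S^n_s=0)}\,ds$ is a martingale --- together with Aldous' tightness criterion, $L^1$ convergence, and uniqueness of the Doob--Meyer decomposition to identify the limit of $V^n$ as $\ell_0(t)$ (citing Borodin for the general finite-variance case); this sidesteps the smoothing argument entirely and is the cleaner route. One discrepancy to flag: you obtain a per-particle branching rate $2M_n\theta/R_n\to 2\theta\beta$ by summing over all $2M_n$ neighbours, whereas the theorem states $\theta\beta$ and the paper's proof counts $\theta M_nR_n^{-1}$ per adjacent deme; your total $2\theta\beta$ is in fact the one consistent with the drift $2\theta\beta\,u(1-u)$ in \eqref{uSPDE} and with the rate $2\theta\beta$ used for the SPDE dual in Section~8, so this looks like an internal factor-of-two inconsistency in the paper rather than an error of yours. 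Finally, like the paper, you leave implicit the asymptotic independence of the coalescence clock from the limiting Brownian paths (the thinning trials are themselves jumps of the difference walk) and the treatment of freshly born pairs whose difference walk starts at $0$; both deserve a sentence in a complete write-up.
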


If we let $\chi^A_t$ denote the spatial locations of the particles in the limit process in Theorem \ref{BCRWlim}, then Lemma \ref{duality} and Theorem \ref{BCRWlim} imply
\beq
\E \prod_{x\in A} (1- u_t(x)) = \E \prod_{y \in \chi^A_t} (1-u_0(y)).
\label{mvduality}
\eeq

\noindent
Here $A$ can be a multi-set, e.g., $\{ a, a, a, b, b \}$. In this case we start three particles at $a$ and two at $b$ and duality says
$$
\E[(1-u_t(a))^3(1-u_t(b))^2] =  \E \prod_{y \in \chi^A_t} (1-u_0(y)).
$$

The duality relationship described in the last paragraph is not new. In 1986, Shiga and Uchiyama \cite{ShiUch}  introduced  it to study a collection of Wright-Fisher diffusions coupled by migrations. Shiga \cite{Shiga88} used it to show uniqueness in law for \eqref{uSPDE}. Doering, Mueller, and Smereka \cite{DMS} gave a simpler description and derivation of the dual. See also Hobson and Tribe \cite{HobTri}.

\subsection{Tracer Dynamics}

Hallatschek and Nelson \cite{HN08} take an interesting approach to studying the ancestral lineage of a particle. That is, the path within the dual $\zeta^{x,t}_s$ that gives the actual ancestor at time $t-s$ of the individual at $x$ at time $t$. Think of our expanding population as a fluid and inject a small amount of red liquid at time 0. The locations of the red fluid at time $t$ will identify the locations of their progeny. In particle terms, we will have states 0, 1, and $1^*$ where the * indicates being labeled by the tracer.  $\xi(x)$ is the
indicator of the particle at $x$ being in states 1 or $1^*$.
To construct the labeled process it is convenient to use the graphical representation. If there is an arrow-$\delta$ from
$y$ to $x$ then $x$ adopts the state of $y$. If there is an arrow from $y$ to $x$ and $y$ is in state 1 or $1^*$ then $x$ will adopt the state of $y$, but nothing happens if $y$ is in state 0. In fluid terms, the color of fluid at $y$ replaces that at $x$. 

The interacting particle system (IPS) dual process gives us the set of possible ancestors of the particle at $x$ at time $t$. To analyze our new process with labeled and unlabeled particles,  we assign an ordering to the particles in the IPS dual in such a way that the {\bf FIRST occupied site} in the list will be the ancestor. The rules are as follows:

\begin{itemize}
  \item If particle $i$ jumps and there is no coalescence, then its location changes but the order does not.
  \item If particle $i$ jumps and coalesces with $j$ then the particle with the higher index is removed from the dual. The surviving particle has its location updated. The remaining particles are reindexed.
   \item If the $i$ particle gives birth then the new particle is labeled $i$, while all particles with indices $j \ge i$ have their indices increased by 1.
\end{itemize}

\noindent
To explain the definition, we will work through the example drawn in Figure \ref{fig:dual}. The successive states are
\begin{align*}
&\{1\}, \quad \{0,1\}, \quad \{0,2\}, \quad \{-1,2\}, \quad \{-1, 3, 2 \},\\
&\{ 0, -1, 3, 2\},\quad \{ 1, -1, 3, 2 \}, \quad \{ 1, -1, 3 \}, \quad \{ 1, -2, -1, 3 \}
\end{align*}
To see this note that under our rules, at the point where the dual jumps from $\{1\} \to \{0,1\}$, if there is a particle at 0 it will give
birth and replace any particle at 1. The next two events involve voting, so the affected particles move but do not change their position in the ordering. The next event is an arrow from 3 to 2. A particle at 3 will replace one at 2, so 3 is inserted in the list before 2. The next novel event is the seventh transition when the particles at 2 and 1 coalesce: at that time we drop the lower ranked particle.  For example, the actual ancestor of the particle $(t,1)$ is $1$ if $\xi_0(1)=1$; the actual ancestor is $-2$ if $\xi_0(1)=0$ and $\xi_0(-2)=1$.
 
Neuhauser \cite{CN92} used a similar dual to show that in the multitype contact process, if the particle death rates are equal, then the one with the higher birth rate takes over the system. The idea  of assigning an ordering to the particles to trace their genealogies is quite similar to that in the lookdown constructions of Donnelly and Kurtz \cite{DK99a, DK99b}.  There each particle is initially assigned a level. Genealogies of any set of particles can be read off from the evolution of the levels. See also Etheridge and Kurtz \cite{EK14} for the lookdown constructions for a very general class of population models.

To do computations, we let $\eta_t(x)=1$ if the individual at $x$ at time $t$ in the biased voter model is labeled and 0 otherwise.  We only label type 1's, so $\eta_t(x) \le \xi_t(x)$. As noted above $\eta_t(x)$ can be computed using the version of the dual $\zeta^{t,x}_s$ in which the points are ordered. That is, $\eta_t(x)=1$ if and only if the first occupied site in the list is labeled. More formally, if under the ordering $\zeta^{t,x}_t = \{ y_1, y_2, \ldots y_K \}$ (note that $K$ and $y_1, \ldots y_K$ are random variables), then
\begin{align*}
P( \xi_t(x) = 0 ) & = E \left( \prod_{i=1}^K (1-\xi_0(y_i)) \right) \equiv EF(\zeta_t^x) \\
P( \eta_t(x) = 1 ) & = E \left( \sum_{j=1}^K \eta_0(y_j) \prod_{i=1}^{j-1} (1-\xi_0(y_i)) \right) \equiv EG(\zeta_t^x)
\end{align*}

In the same way one computes
\begin{align}
P( \xi_t(x_1) = 0, \ldots \xi_t(x_m) = 0, & \, \eta_t(x_{m+1}) = 1, \ldots ,\eta_t(x_n) = 1 ) 
\label{ldual}\\
& = E\left[ \prod_{i=1}^m F(\zeta_t^{x_i}) \prod_{i=m+1}^n G(\zeta_t^{x_i}) \right]
\nonumber
\end{align}
A standard argument shows that the probabilities just computed determine the distribution of $(\xi_t,\eta_t)$.

As before, we define the approximate density for the labeled particles by
$$
\ell^n_t(w)  := \frac{1}{M_n} \sum_{i=1}^{M_n} \eta_t(w,i)
$$
and linearly interpolate to obtain a function $\ell^n_t(w)$ for all $w\in \RR$. The following is the main result of this paper.

\begin{theorem} \label{mvLimit2}
	Suppose that as $n\to\infty$, the conditions on $r_n$, $R_n$, $M_n$ and $L_n$ in Theorem \ref{mvLimit} hold, and that the initial condition $(u^n_0,\ell_0^n)$ converges in $C_b(\RR) \times C_b(\RR)$ to $(f_0,g_0)$.
	Then the pair of approximate densities $(u_t^n,\,\ell^n_t)_{t\geq 0}$ converges in distribution in  $D([0,\infty),\,C_b(\RR)\times C_b(\RR))$ to a continuous $C_b(\RR)\times C_b(\RR)$ valued process $(u_t,\,\ell_t)_{t\geq 0}$ which is the weak solution to the coupled (stochastic) partial differential equations
\begin{align*}
\partial_t u & = \alpha\,\Delta u + 2\theta \,\beta\, u(1-u) + |4\gamma \,\ell(1-u)|^{1/2} \, \dot{W}^0 + |4\gamma \,(u-\ell)(1-u)|^{1/2} \,\dot{W}^1\\
\partial_t \ell & = \alpha\,\Delta \ell + 2\theta \,\beta\, \ell\,(1-u)
+|4\gamma \,\ell(1-u)|^{1/2}\,\dot{W}^0 + |4\gamma \,\ell\,(u-\ell)|^{1/2}\,\dot{W}^2
\end{align*}
with initial condition $(u_0,\,\ell_0)=(f_0,\,g_0)$, where $\dot{W}^i$, $i=0,1,2$ are three independent space-time white noises on $[0,\infty)\times \RR$.
\end{theorem}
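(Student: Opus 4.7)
The plan follows the three-step strategy that proved Theorem \ref{mvLimit}: first show tightness of $(u^n, \ell^n)$ in $D([0,\infty),\,C_b(\RR)\times C_b(\RR))$; second identify every subsequential limit as a weak solution of the coupled SPDE via a martingale problem; third prove uniqueness in law using the continuum version of the duality \eqref{ldual}. For tightness, because $0 \le \ell^n_t \le u^n_t \le 1$, the Mueller-Tribe type moment estimates already used for $u^n$ transfer directly to $\ell^n$: a Dynkin decomposition
\[
\<\ell^n_t, \phi\> = \<\ell^n_0, \phi\> + \int_0^t \<\mathcal{L}^n_s \ell^n_s, \phi\>\,ds + M^{n,\ell}_t(\phi)
\]
yields increment bounds of the same order as those for $u^n$, so Aldous-Rebolledo in time together with the Kolmogorov-type spatial estimate used in \cite{MT95} gives joint tightness in both coordinates.

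To identify the limit I would classify the graphical events into six voting categories (source-target type among $\{0,\,\text{unlabeled }1,\,1^*\}$) and two branching categories (offspring of a labeled or unlabeled $1$ landing on a $0$). After the Taylor expansion that produces the $\alpha\Delta$ term, a direct enumeration gives drifts $2\theta\beta\,u(1-u)$ for $u^n$ and $2\theta\beta\,\ell(1-u)$ for $\ell^n$, together with
\[
\tfrac{d}{dt}\<M^{n,u}\>(w) \to 4\gamma u(1-u), \quad \tfrac{d}{dt}\<M^{n,\ell}\>(w) \to 4\gamma \ell(1-\ell), \quad \tfrac{d}{dt}\<M^{n,u},M^{n,\ell}\>(w) \to 4\gamma \ell(1-u),
\]
where the cross variation arises only from the symmetric pair of events ``$1^*$ replaces $0$'' and ``$0$ replaces $1^*$'' that move $u$ and $\ell$ in the same direction. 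This is exactly the covariance structure of $a_0\dot{W}^0 + a_1\dot{W}^1$ for $u$ and $a_0\dot{W}^0 + a_2\dot{W}^2$ for $\ell$ with the coefficients in the statement, so a Walsh-type orthogonal martingale-measure representation realises any limit point as a weak solution of the coupled SPDE on a suitably enlarged probability space.

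Uniqueness is the hard step. I would first upgrade Theorem \ref{BCRWlim} to include the deterministic ordering rules for the dual (insertion at index $i$ on a birth from particle $i$, removal of the higher index on coalescence), which is routine because those rules are continuous functionals of the particle trajectories. Passing $n\to\infty$ in \eqref{ldual} then gives the limiting moment duality
\[
\E\Bigl[\prod_{i=1}^m (1-u_t(x_i)) \prod_{j=m+1}^n \ell_t(x_j)\Bigr] = \E\Bigl[\prod_{i=1}^m F(\chi^{x_i}_t) \prod_{j=m+1}^n G(\chi^{x_j}_t)\Bigr]
\]
with $F,G$ evaluated on $(f_0, g_0)$ and $\chi$ the limiting ordered branching-coalescing Brownian motion. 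Since $0 \le \ell_t \le u_t \le 1$, these mixed moments over all finite collections of spatial points determine every polynomial in $(u_t(x_1),\ldots,\ell_t(x_k))$ by Stone-Weierstrass, hence the finite-dimensional laws, and the path continuity from step one upgrades this to uniqueness of the full law. The main obstacle I anticipate is proving that \emph{every} martingale-problem solution satisfies this duality; following Shiga \cite{Shiga88}, I would apply Ito's formula to $\prod(1 - u_t(x_i))\prod \ell_t(x_j)$ along the SPDE dynamics and match the resulting drift against the backward generator of the ordered dual. The novel ingredient relative to \cite{Shiga88} is that the alternating sum $G(\zeta) = \sum_j \eta_0(y_j)\prod_{i<j}(1-\xi_0(y_i))$ must be reconciled with the shared noise $\dot{W}^0$ in the two SPDEs, and it is precisely the cross-coefficient $4\gamma\ell(1-u)$ that makes this cancellation work — which is what makes \eqref{ldual} a genuinely new duality rather than a tensor product of two copies of Shiga's.
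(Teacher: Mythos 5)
Your architecture matches the paper's: tightness via moment estimates, identification of limit points through the approximate martingale problem, and uniqueness via the continuum version of \eqref{ldual}. The covariance structure you derive is exactly what the paper obtains: the three orthogonal martingales $Z^0,Z^1,Z^2$ have brackets converging to $4\gamma\ell(1-u)$, $4\gamma(u-\ell)(1-u)$ and $4\gamma\ell(u-\ell)$, with $Z^0$ shared between the two equations, which gives your cross-variation $4\gamma\ell(1-u)$ and the marginal $4\gamma\ell(1-\ell)$ for $\ell$. One caveat on tightness: Aldous--Rebolledo applied to $\langle\ell^n,\phi\rangle$ only yields tightness in a space of distributions; to get $C$-tightness in $C_b(\RR)$ with the local-uniform metric the paper passes through the Green's function representation \eqref{E:Green_nell} and proves two-parameter moment bounds of order $|t_1-t_2|^{p/4}+|z_1-z_2|^{p/2}+M^{-p}$, followed by an interpolation step to control the jumps; the time regularity has to come from those mild-form estimates, not from Aldous--Rebolledo.

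The genuine gap is in the uniqueness step, which is the paper's main novelty. You propose to apply It\^o's formula to $\prod_{i\le m}(1-u_t(x_i))\prod_{j>m}\ell_t(x_j)$ and ``match the resulting drift against the backward generator of the ordered dual.'' Two things block this as stated. First, that product is not a duality function in the Ethier--Kurtz sense for the ordered dual: the drift $2\theta\beta\,\ell(1-u)$ acting on a factor $\ell(x_j)$ produces $2\theta\beta\,\ell(x_j)z(x_j)\prod(\cdots)$ with no compensating $-2\theta\beta\,(\hbox{original})$, and the mixed covariation $-4\gamma\,\ell(1-u)$ between a $z$-factor and an $\ell$-factor produces $-4\gamma\,(\hbox{original})\,\delta_{\{x_i=x_j\}}$; these terms do not assemble into $\sum_B Q(A,B)H(\cdot,B)$ for a branching--coalescing generator. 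The actual content of the paper's Lemma \ref{SPDEunique} is the identification of the correct dual function, namely the ordered alternating sums
$F_k((z,\ell),(x,n))=\sum_{j_1<\cdots<j_k}\ell(x_{j_k})\cdots\ell(x_{j_1})\prod_{i<j_1}z(x_i)$,
for which the $\theta\beta$ and $\gamma$ contributions telescope exactly into jumps $n\to n+1$ at rate $\beta n$ and $n\to n-1$ at rate proportional to $n(n-1)$ local time; your proposal never produces this function, and without it the generator matching fails. Second, It\^o's formula cannot be applied directly to point evaluations of the SPDE: the covariations at coinciding arguments are local times, and the paper must mollify $z$ and $\ell$ by a Gaussian kernel $p^\ep$, run the Ethier--Kurtz identity \eqref{EKdual} at fixed $\ep$, and then use the Athreya--Tribe occupation-time argument to convert the $\ep\to0$ error terms into integrals against $dL^{i,j}_s$. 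These two ingredients are exactly what your ``main obstacle'' paragraph defers, and they are where the work lies.
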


\noindent
As the proof shows the three noises $\dot{W}^0$, $\dot{W}^1$ and $\dot{W}^2$ refer to voting interactions between $1^*$ and 0, $1$ and $0$, and $1^*$ and $1$ respectively. The drift in $\ell_t$ is $2\theta\beta \,\ell (1-u)$ because labeled particles only have a selective advantage in competition with those of type 0.

We will prove our result by showing that the sequence of approximating processes is tight. To conclude that there is weak convergence,
we need to show 

\begin{lemma} \label{SPDEunique}
When $\gamma>0$, the solution of the coupled SPDE in Theorem \ref{mvLimit2} is unique in law.
\end{lemma}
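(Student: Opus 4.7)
My plan is to prove uniqueness in law by extending the duality method Shiga \cite{Shiga88} used for the Wright-Fisher SPDE \eqref{uSPDE}: I set up a duality between $(u_t,\ell_t)$ and the limiting ordered branching coalescing Brownian motion $\chi_t^{\vec x}$ obtained by passing to the limit in the tracer construction of Section 1.4, then use that duality to pin down the mixed moments of $(u_t,\ell_t)$ from its initial condition. First, define the dual functionals, mimicking the identities in \eqref{ldual}. For an ordered finite configuration $\vec y = (y_1,\ldots,y_K)$ and initial data $(f,g)\in C_b(\RR)^2$ with $0\le g\le f\le 1$, set
\begin{equation*}
F(f;\vec y) = \prod_{i=1}^K (1-f(y_i)), \qquad G(f,g;\vec y) = \sum_{j=1}^K g(y_j) \prod_{i=1}^{j-1}(1-f(y_i)).
\end{equation*}
The limit $\chi_t^{\vec x}$ of the discrete ordered dual inherits from Theorem \ref{BCRWlim} the Brownian motion/branching/local-time-coalescence dynamics together with the three book-keeping rules for the ordering. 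The duality identity I want is
\begin{equation*}
\E\!\left[\prod_{i=1}^m (1-u_t(x_i)) \prod_{j=m+1}^n \ell_t(x_j)\right] = \E\!\left[\prod_{i=1}^m F\bigl(f_0;\chi_t^{x_i}\bigr) \prod_{j=m+1}^n G\bigl(f_0,g_0;\chi_t^{x_j}\bigr)\right],
\end{equation*}
where on the right the ``clusters'' $\chi_t^{x_i}$ are the ordered descendants of the respective starting point inside the joint dual $\chi_t^{\vec x}$ (so ordering is maintained across cluster mergers).

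Next I verify the duality at the infinitesimal level by applying It\^o's formula to $s\mapsto \E\bigl[\Phi(\chi_{t-s}^{\vec x}; u_s,\ell_s)\bigr]$, with $\Phi$ the test functional on the right-hand side above. I expect three compensating pairs of terms: (i) the Laplacian $\alpha\Delta$ in the SPDE cancels the generator of the Brownian motions in the dual; (ii) the selection drifts $2\theta\beta u(1-u)$ and $2\theta\beta \ell(1-u)$ produce the branching part of the dual, with the new particle inheriting the index of its parent; (iii) the three noise covariances $4\gamma\ell(1-u)$, $4\gamma(u-\ell)(1-u)$, $4\gamma\ell(u-\ell)$ give quadratic-variation corrections that match exactly the local-time coalescence rules---the decomposition into three noises is designed precisely so that, when two dual particles $y_j<y_k$ collide, the three cross-terms combine in the ratios dictated by the ordered rule that the higher-indexed particle is removed.

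Once duality is in hand, uniqueness in law follows by a now-standard argument. Since $(u_t,\ell_t)$ takes values in $\{(a,b)\in[0,1]^2:\,b\le a\}$, any finite-dimensional marginal has its distribution determined by the mixed polynomial moments $\E[\prod_i (1-u_t(x_i))^{a_i}\ell_t(x_i)^{b_i}]$, and these are recovered from the duality by allowing repeated points in $\vec x$. The right-hand side depends only on $(f_0,g_0)$ and on the law of $\chi^{\vec x}$, which is fixed independently of the chosen solution. Hence the finite-dimensional distributions---and by path-continuity the full law on $D([0,\infty),\,C_b(\RR)^2)$---are uniquely determined.

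The main obstacle will be step (iii): the generator calculation must reconcile the asymmetric coalescence rule of the ordered dual (``higher index dies'') with the symmetry of space--time white noise, while simultaneously handling the nonlinear functional $G$ whose terms depend on the specific ordering through the product $\prod_{i<j}(1-f(y_i))$. The precise matching of the three noise coefficients with the three possible state-pairs $(1^*,0)$, $(1,0)$, $(1^*,1)$ of colliding particles is the content of the ``new duality equation'' promised in the abstract, and is where essentially all of the rigorous algebra is concentrated; once that identity is verified, the remaining analytic work (justifying It\^o on these unbounded-configuration dual functionals, and passing to the $C_b(\RR)$ paths) is routine given the uniform bound $0\le \ell\le u\le 1$ and Theorem \ref{BCRWlim}.
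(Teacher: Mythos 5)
Your plan is essentially the paper's own proof: the duality function you write down (the ordered sum $\sum_j g(y_j)\prod_{i<j}(1-f(y_i))$ built from \eqref{ldual}), the ordered branching--coalescing Brownian dual with local-time killing, the generator-matching of Laplacian/branching/noise terms against motion/birth/ordered-coalescence, and the conclusion via mixed moments plus Ethier--Kurtz all coincide with Section 8. The one piece of machinery you leave unnamed is how the paper actually justifies the generator calculation and the local-time coalescence terms --- mollifying $z=1-u$ and $\ell$ by a Gaussian kernel $p^\ep$ and passing $\ep\to 0$ following Athreya and Tribe \cite{AthTri} --- which is where the ``routine analytic work'' you defer actually lives.
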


\noindent
To do this we use our duality function \eqref{ldual}. Details are in Section \ref{sec:unique}.

In Theorems \ref{mvLimit}, \ref{BCRWlim} and \ref{mvLimit2}, the assumption $\alpha>0$ is used crucially in the proof of tightness, but we allow $\beta$ or $\gamma$ to be 0. The deterministic regime ($\gamma=0$) occurs, for instance, if $r_n=n^{1/a}$, $L_n= n^{1/b}$, $M_n=\lceil\alpha n^{2/b-1/a}\rceil$ and $R_n=M_n/\beta$, where $2a>b>a>0$. When $\gamma=0$  the limiting process is a PDE
\begin{align*}
\partial_t u & = \alpha\,\Delta u + 2\theta \,\beta\, u(1-u) \\
\partial_t \ell & = \alpha\,\Delta \ell + 2\theta \,\beta\, \ell\,(1-u).
\end{align*}
This PDE obviously has a unique weak solution (solve the first equation and then solve the second), but if one wants, this can be proved using duality.

\subsection{Lineage dynamics}

Using tracer dynamics Hallatschek and Nelson \cite{HN08}, see page 163 and Appendix A,  heuristically derived the probability density
$G(x,t|x',t')$ that an individual at $x'$ at time $t'$ is descended from an ancestor at $x$ at time $t<t'$. 
Here we describe their argument with the hope that someone can find a way to make it rigorous. 
Making sense of equation \eqref{HNG} for the ancestral lineages seems difficult, but since these lineages are embedded in the limit of the
branching coalescing random walk their behavior cannot be too pathological.

The authors of \cite{HN08} considered a more general situation in which the population density in a frame moving with velocity $v$ is
$$
\partial_t u(x,t) = D \partial^2_x u(x,t) + v \partial_x u(x,t) + K(x,t)
$$
where, for instance, $K(x,t) = su(1-u) + \ep\sqrt{u(1-u)} Z$ with
$Z$ being a space-time White noise, and we have changed their $c$ to $u$. They concluded, see their (3), that
\begin{align*}
\partial_t G(x,t|x',t') & = -\partial_x J(x,t|x',t') \\
J(x,t|x',t') & = - D\partial_x G + \{ v + 2D\partial_x \log[u(x,t)] \} G.
\end{align*}
Here $(x',t')$ is thought of as the initial condition and $(x,t)$ as the finial condition, so this is the forward
equation
\beq
\partial_t G = D\partial^2_x G - \partial_x (\{ v + 2D\partial_x \log[u(x,t)] \} G)
\label{HNG}
\eeq
and the drift in the diffusion process is $ v + 2D\partial_x \log[c(x,t)]$.

As we will now show, a closely related equation ``follows'' from Theorem \ref{mvLimit2}. We used quotation marks because we use the nonexistent It\^o's formula 
for  the SPDE in Theorem \ref{mvLimit2} and apply it to the function $f(\ell,u) = \ell/u$ which is not continuous at $(0,0)$.

Suppose $(u,\,\ell)$ solves the SPDE and let $\rho=\ell/u$. By Theorem \ref{mvLimit2},
\begin{align*}
\partial_t \rho & =\frac{u\,\partial_t\ell \,-\,\ell\,\partial_tu}{u^2} 
=\, \frac{1}{u}\,\Big[\, \alpha\,\Delta \ell +  2\theta \,\beta\,\ell\,(1-u) + |4\gamma \ell (1-u)|^{1/2}\,\dot{W}^0 \\
& + |4\gamma \ell (u-\ell)|^{1/2}\,\dot{W}^2\,\Big]
\,- \frac{\ell}{u^2}\,\Big[ \, \alpha\,\Delta u + 2\theta \,\beta\, u(1-u) \\
& + |4\gamma \ell(1-u)|^{1/2} \,\dot{W}^0
+  |4\gamma (u-\ell)(1-u)|^{1/2} \,\dot{W}^1\,\Big].
\end{align*}

The terms involving $\theta\beta$ cancel. To combine the Laplacian terms we use the formula 
$$
\Delta\left(\frac{\ell}{u}\right) = \frac{\Delta \ell}{u}- \frac{\ell\,\Delta u}{u^2} 
- 2\,\frac{\partial_x u}{u}\cdot\partial_x\left(\frac{\ell}{u}\right).
$$
To add up the noises we note that since the $W^i$ are independent, the variances add up to
\begin{align*}
&\frac{(u-\ell)^2}{u^4} \ell(1-u) + \frac{\ell(u-\ell)}{u^2} + \frac{\ell^4(u-\ell)(1-u)}{u^4} \\
=&\, \frac{\ell(u-\ell)}{u^2} \left[ \frac{u-\ell)(1-u)}{u^2} + \frac{u^2}{u^2} + \frac{\ell(1-u)}{u^2} \right] \;=\; \frac{\ell(u-\ell)}{u^3}
\end{align*}
since $(u-\ell)(1-u) + \ell(1-u) + u^2 = u(1-u) + u^2 = u$. Combining our calculations,
\begin{equation}\label{rhoSPDE}
\partial_t \rho \,=\,\alpha\,\Delta \rho +2\alpha\,\partial_x \log u\cdot\partial_x \rho\,+\,|4\gamma \,\rho\,(1-\rho)/u|^{1/2}\,\dot{W}
\end{equation}
for some white noise $\dot{W}$.
To compare \eqref{rhoSPDE} with \eqref{HNG}, note that their $D=\alpha$, they work in a moving reference frame 
and their equation is for a fixed realization of the total population size; while ours is in a fixed reference frame
and does not condition on $u(t,x)$ and hence retains the 
fluctuation term $|4\gamma \,\rho\,(1-\rho)/u|^{1/2}\,\dot{W}$.

Equations \eqref{HNG} and \eqref{rhoSPDE} both contain drift terms of the form $\partial_x \log[u(x,t)]$. 
It is a well-known fact that solutions of the SPDE in
\eqref{uSPDE} are H\"older continuous with exponent $1/2-\ep$ in space and $1/4-\ep$ in time, so it is not clear how to make
sense of these equations. The fact that $\eta_t(x) \le \xi_t(x)$ means that $\ell \le u$, so in computing $\ell/u$ we will
never divide a positive number by 0. However solutions to $u$ have compact support \cite{MS95}, so it is not clear if the ratio of densities
$\ell^n/u^n$ will be tight.

\subsection{Concluding remarks} 

Theorem 1 is a special case of Theorem 4. Theorem \ref{BCRWlim} is proved in Section 2. The rest of the paper is devoted to the proof for Theorem \ref{mvLimit2}. In Section 3 we introduce stochastic integral representations for $(u^n_t,\ell^n_t)$ and formulate approximate martingale problems. This is now a common approach in the study of scaling limits of particle systems, see \cite{DurPer, CDP00, DMP}. The calculations for the $u$ equation are almost identical to those in Section 3 of Mueller and Tribe \cite{MT95}, but some minor changes are needed to study the joint distribution $(\ell,u)$. 

In Sections 5--7 we prove tightness.
Again many of the ideas come from \cite{MT95}, but since they only write out the details for their contact process limit theorem, and we have to prove the joint convergence, we have written out the details. See Kleim \cite{Kleim} for a recent example of convergence of rescaled Lotka-Volterra models to a one-dimensional SPDE, this time with a cubic drift term.  The main ideas of the tightness proof are given in Section 5. Two lemmas that require a lot of computation are proved in Section 6. Some nonstandard random walk estimates are proved in Section 7. Finally Lemma \ref{SPDEunique}, which establishes distributional uniqueness for the coupled SPDE by using a duality based on \eqref{ldual}, is proved in Section 8. 

\bigskip
 
{\bf Acknowledgement.}  We would like to thank Carl Mueller and Edwin Perkins for their help while we were writing this paper. Two referees made many detailed comments that improved the readability (and correctness) of this paper. 

\clearp

\section{Proof of Theorem \ref{BCRWlim}} \label{sec:PfTh3}

To get started suppose that there are no births in the dual and consider the special case in which there are initially two particles. Let $S^n_t$ be a random walk that jumps from $x$ to $x\pm 1/L_n$ at rate $2r_nM_n$. The reader should think of this difference of the location of two particles (hence the factor of $2$ in $2r_nM_n$), but we allow the two lineages to move independently even after they hit. An elementary computation shows that if
$$
V^n_t = \frac{4r_nM_n}{L_n} \int_0^t 1_{( S^n_s = 0)} \, ds
$$
$|S^n_t|-V^n_t$ is martingale. As $t\to\infty$, $|S^n_t|$ converges to the absolute value of a Brownian motion $B_t$ with variance $4\alpha t$.

The next result has been proved for finite variance random walks by Borodin \cite{Borodin} in 1981. To keep this paper self-contained, we will give a simple proof for the nearest neighbor case.

\begin{lemma}
$V^n_t$ converges in distribution to $\ell_0(t)$ the local time at 0 for the Brownian motion $B_t$.
\end{lemma}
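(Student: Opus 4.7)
The plan is to use a discrete Tanaka decomposition together with a functional invariance principle, then identify the limit of $V^n_t$ via uniqueness of the Doob--Meyer decomposition of $|B_t|$.

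First, I would establish a functional central limit theorem for the difference walk itself: since $S^n$ is a symmetric nearest-neighbor random walk with step size $1/L_n$ and jump rate $2r_nM_n$, its infinitesimal variance equals $4\alpha_n \to 4\alpha$, while the jumps are uniformly $o(1)$. Donsker's invariance principle (in the martingale-problem formulation) yields $S^n \Rightarrow B$ in $D([0,\infty),\RR)$, where $B$ is a Brownian motion of variance $4\alpha t$. By the continuous mapping theorem, $|S^n| \Rightarrow |B|$.

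Next, I would show that $(V^n)$ is tight in $C([0,\infty),\RR)$ and that $(S^n, V^n)$ is jointly tight. The key input is the ``$L_n$-Tanaka'' identity that $M^n_t := |S^n_t| - V^n_t$ is a mean-zero martingale: indeed, away from $0$ the process $|S^n|$ evolves as a martingale, while at $0$ it has drift $4r_nM_n/L_n$, whose integral is $V^n$. Since $V^n$ is monotone, continuous, and $V^n_0 = 0$, tightness in $C$ reduces to tightness of one-dimensional marginals $V^n_t$. From $E[V^n_t] = E|S^n_t| = O(\sqrt{t})$ and the $L^2$ bound
\[
E[(V^n_t)^2] \;\leq\; 2\, E[(|S^n_t|)^2] + 2\, E[(M^n_t)^2] \;=\; O(t),
\]
where $E[(M^n_t)^2]$ is controlled by the predictable quadratic variation of a jump martingale with jumps of size $1/L_n$ at total rate $O(L_n^2)$, the required tightness follows. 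Joint tightness of $(S^n, V^n)$ then comes from tightness of each coordinate in its respective Skorokhod space.

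Finally, I would identify the limit. Along any weakly convergent subsequence $(S^n, V^n) \Rightarrow (B, V)$, continuity of $V$ (inherited from uniform convergence on compacts of the continuous $V^n$) and of $|\cdot|$ gives $M^n \Rightarrow |B| - V$. The uniform $L^2$ bound above makes the family $\{M^n_t\}$ uniformly integrable, so the limit $|B_t| - V_t$ is a martingale with respect to the filtration generated by $(B,V)$; since $V^n_t$ is a measurable functional of $(S^n_s)_{s \le t}$, $V$ is adapted to the filtration of $B$. But $|B_t|$ is a continuous submartingale with a unique Doob--Meyer decomposition, which by Tanaka's formula is $|B_t| = \int_0^t \mathrm{sgn}(B_s)\,dB_s + \ell_0(t)$. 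Hence $V_t = \ell_0(t)$ almost surely. Since the limit is the same along every subsequence, the full sequence converges, as desired.

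The main obstacle is the identification step: one must confirm that the martingale property of $|S^n_t| - V^n_t$ passes to the limit (via uniform integrability on compact time intervals) and that the limiting increasing process $V$ is adapted to the filtration generated by $B$ so that Doob--Meyer uniqueness actually pins it down as local time rather than merely as some continuous non-decreasing process making $|B|-V$ a martingale relative to a larger filtration.
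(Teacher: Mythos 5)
Your overall strategy --- decompose $|S^n_t| = M^n_t + V^n_t$ with $M^n$ a martingale, prove tightness, pass the martingale property to the limit, and invoke uniqueness of the Doob--Meyer decomposition of $|B_t|$ to identify the limiting increasing process with $\ell_0(t)$ --- is exactly the paper's argument, and your attention to uniform integrability and to the adaptedness of the limit $V$ to the filtration of $B$ is, if anything, more careful than the published proof. The one step that does not hold as stated is the tightness reduction: for continuous nondecreasing processes started at $0$, tightness of the one-dimensional marginals does \emph{not} imply tightness in $C$ (consider $V^n_t = (nt)\wedge 1$, whose marginals all live in $[0,1]$ but whose limit is a step function, so no subsequence is tight in $C$). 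Monotonicity lets you upgrade pointwise convergence to locally uniform convergence only \emph{after} you know the limit is continuous, which is part of what must be proved; you therefore need a uniform-in-$n$ control on increments, not just on marginals.

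The needed bound is already implicit in your first-moment estimate. By the strong Markov property and the fact that the expected occupation time of $0$ is maximized when the walk starts at $0$,
\begin{equation*}
\E\bigl[\,V^n_{S+t} - V^n_{S} \mid \mathcal{F}_S\,\bigr] \;=\; \E_{S^n_S}\bigl[V^n_t\bigr] \;\le\; \E_0\bigl[V^n_t\bigr] \;\le\; c\,\sqrt{t}
\end{equation*}
for every stopping time $S$, uniformly in $n$. This is Aldous' criterion and yields $C$-tightness of $(V^n)$; it is precisely the estimate the paper uses. With that repair, the remainder of your argument (joint tightness, $L^1$/uniform-integrability passage of the martingale property, Tanaka's formula and Doob--Meyer uniqueness) goes through and coincides with the paper's proof.
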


\begin{proof}
An easy computation for simple random walk shows that for any stopping time
$$
EV^n_{S+t} - V_{S} \le E_0V^n_t \le c \sqrt{t}
$$
so by Aldous' criterion (see e.g., Theorem 4.5 on page 320 of Jacod and Shiryaev \cite{JaShir} the sequence
$V^n_t$ is tight. Let $V^{n(k)}_t$ be a convergent subsequence with limit $V_t$.
$|S^{n(k)}_t - V^{n(k)}_t$ is a martingale. Using the maximal inequality on the random
walk, and the dominated convergence theorem on the increasing process, both processes
converge to their limits in $L^1$. Since conditional expectation is a contraction in
$L^1$, it follows that $|B_t| - V_t$ is a martingale. Now $\ell_0(s)$ is the increasing process associated with
$|B_t|$ See e.g., (11.2) on page 84 of Durrett's Stochastic Calculus book \cite{DSC}. By the uniqueness of the Doob-
Meyer decomposition, there is only one subsequential
limit, so the entire sequence converges in distribution to $\ell_0(t)$.
\end{proof}

The sojurn times at 0 are independent and exponential with rate $4r_nM_n$, so the number of visits to 0 up to time $t$
$$
N^n_t \sim  4r_nM_n \int_0^t 1_{( S^n_s = 0)} \, ds  
$$
and it follows that $N^n_t/L_n \to \ell_0(t)$. 
On each visit to 0, the two particles have a probability $1/M_n$ to coalesce. Our assumptions imply that
$$
\frac{M_n}{L_n} \cdot \frac{r_n}{L_n} \to \alpha \quad\hbox{so}\quad  \frac{L_n}{M_n}  \to \frac{\gamma}{\alpha} 
$$
and the desired result has been proved for the processes with no births. To add births we note there are $M_n$ cells at each deme and $\tilde{P}_{xy}$ has rate $\theta R_n^{-1}$, so the branching rate at each deme is $\theta M_n\,R_n^{-1}\to \theta \beta$. 

\clearp

\section{Approximate Martingale Problems} \label{sec:AMP}

For simplicity we drop the subscript $n$'s on $M$, $L$, $R$ and $r$. We leave the superscript $n$ in $u^n_t$ and $\ell^n_t$
to distinguish the approximating processes from their limits. We write 
$$
\<f,g\>:= \frac{1}{L}\sum_{w\in L^{-1}\ZZ} f(w)g(w)
$$  
whenever $\< |f|, |g| \> < \infty$  and adopt the convention that $\phi(x):=\phi(w)$ when $x=(w,i)$.

\subsection{Type 1 particles}

To develop the martingale problem, we note that dynamics of $(\xi_t)_{t\geq 0}$ can be described by the equation
\begin{align}\label{Peq}
\xi_t(x) = \xi_0(x)  &+ \sum_{y\sim x} \int_0^t (\xi_{s-}(y) - \xi_{s-}(x)) \, dP^{x,y}_s \\
& + \sum_{y\sim x} \int_0^t \xi_{s-}(y)(1 - \xi_{s-}(x)) \, d\tilde P^{x,y}_s.
\nonumber
\end{align}
In the first integral, if $\xi_{s-}(y) =\xi_{s-}(x)$ then nothing happens.
\begin{align*}
&\hbox{If $\xi_{s-}(y) =1$ and $\xi_{s-}(x) =0$ then $\xi_s(x)=1$;} \\
&\hbox{if $\xi_{s-}(y) =0$ and $\xi_{s-}(x) =1$ then $\xi_s(x)=0$.}
\end{align*}
In the second integral, nothing happens unless $\xi_{s-}(y) =1$ and $\xi_{s-}(x) =0$. In this case $\xi_s(x)=1$.

Let $\phi: [0,\infty)\times \RR \to \RR$ be continuously differentiable in $t$, twice continuously differentiable in $x$
and have compact support in $[0,T] \times \RR$ for any $T$.
Applying integration by parts to $\xi_t(x) \phi_t(x)$, using \eqref{Peq}, and summing over $x$, we obtain for all $t\in[0.T]$,
\begin{align}
\<u^n_t,\,\phi_t\>- & \<u^n_0,\,\phi_0\>  -\int_0^t \<u^n_s,\partial_s\phi_s\>ds  
\nonumber\\
&= (ML)^{-1} \sum_x \sum_{y\sim x} \int_0^t (\xi_{s-}(y) - \xi_{s-}(x)) \phi_s(x) \, dP^{x,y}_s
\label{term1}\\
& + (ML)^{-1} \sum_x \sum_{y\sim x} \int_0^t \xi_{s-}(y)(1 - \xi_{s-}(x)) \phi_s(x) \, d\tilde P^{x,y}_s.
\label{term2}
\end{align}

{\bf Drift term. } We break \eqref{term2} into an average term and a fluctuation term	
\begin{align}
& (ML)^{-1} \sum_x \sum_{y\sim x} \int_0^t \xi_{s-}(y)(1 - \xi_{s-}(x)) \phi_s(x) \,\theta R^{-1} \, ds
\label{term2a} \\
& + (ML)^{-1} \sum_x \sum_{y\sim x} \int_0^t \xi_{s-}(y)(1 - \xi_{s-}(x)) \phi_s(x) \, (d\tilde P^{x,y}_s - \theta R^{-1} \, ds ).
\label{term2b}
\end{align}
Recalling the definition of the density, \eqref{term2a} becomes 
\begin{align*}
& \theta \cdot \frac{M}{R} \cdot \frac{1}{L} \sum_{w\in L^{-1}\ZZ} \int_0^t [u^n_{s-}(w-L^{-1}) + u^n_{s-}(w+L^{-1})] (1-u^n_{s-}(w)) \phi_s(w) \, ds
\end{align*}
Since $M/R\to\beta$, this converges to  
$$
\theta \beta \int_0^t \int_{\RR} 2 u_s(w) (1-u_s(w)) \phi_s(w) \, dw \, ds\,
$$ 
as $n \to\infty$.
Here, and in what follows, the claimed convergences follow once we have proved $C$-tightness (see Section 5 for the proof). We have established convergence
of finite dimensional distributions so the sequences of processes converge in distribution. 

The second term \eqref{term2b} is a martingale $E^{(2)}_t(\phi)$ with
\begin{align*}
\langle E^{(2)}(\phi) \rangle_t \le &\,\frac{\theta}{ R (ML)^{2}} \sum_x \sum_{y\sim x} \int_0^t \phi^2_s(x) \, ds\\
\leq &\, \frac{2\theta}{L R} \int_0^t \<1,\,\phi^2_s\>\,ds \to 0 \qquad\hbox{since $LR\to\infty$.}
\end{align*}

{\bf White noises. } We can rewrite \eqref{term1} as
\beq
(ML)^{-1} \sum_x \sum_{y\sim x} \int_0^t \{\xi_{s-}(y)[1-\xi_{s-}(x)] - \xi_{s-}(x)[1-\xi_{s-}(y)]\} \phi_s(x) \, dP^{x,y}_s.
\label{term1new}
\eeq
We now rewrite the integrand as 
\begin{align}
& \xi_{s-}(y)[1-\xi_{s-}(x)] \phi_s(y) - \xi_{s-}(x)[1-\xi_{s-}(y)] \phi_s(x) \label{ig1} \\
& + \xi_{s-}(y)[1-\xi_{s-}(x)] (\phi_s(x) - \phi_s(y)). \label{ig2}
\end{align}
We work first with \eqref{ig1}. 		
Interchanging the roles of $x$ and $y$ in the double sum, letting $\xi^c_t(z) = 1- \xi_t(z)$,
and writing $Q^{x,y}_t = P^{y,x}_s - P^{x,y}_s$ this part of \eqref{term1new} becomes
\beq 
(ML)^{-1} \sum_x \sum_{y\sim x} \int_0^t \xi^c_{s-}(y) \xi_{s-}(x) \phi_s(x) \, dQ^{x,y}_t.
\label{term1b}
\eeq

To prepare for treating the joint SPDE we split \eqref{term1b} into $Z^0_t(\phi)+Z^1_t(\phi)$ 
where the $Z^i_t(\phi)$ are defined by the next two equations
\begin{align}
&(ML)^{-1} \sum_x \sum_{y\sim x} \int_0^t  \xi^c_{s-}(y)\, \eta_{s-}(x) \phi_s(x) \, dQ^{y,x}_s, 
\label{noise0}\\
&(ML)^{-1} \sum_x \sum_{y\sim x} \int_0^t \xi^c_{s-}(y) \,(\xi_{s-}(x) - \eta_{s-}(x)) \phi_s(x) \, dQ^{y,x}_s.
\label{noise1}
\end{align}
These two martingale terms use the same Poisson processes but the product of their integrands is 0 
since $(1-\xi_t(x))\eta_t(x)$ vanishes for all $x$ and $t$. Hence these martingales are uncorrelated.

The variance process $\langle Q^{x,y} \rangle_t = 2r t$. 
Our assumptions on $\phi$ imply that  $\sup_{s\in[0,t]}|\phi_s(x)-\phi_s(y)| \le K_t/L$ for some $K_t<\infty$, so
ignoring the difference between $\phi_s(x)$ and $\phi_s(y)$, we have
$$
\langle Z^0(\phi) \rangle_t  =  4rL^{-2} \int_0^t \sum_{w \in L^{-1}\ZZ}  \ell^n_{s-}(w) (1 - u^n_{s-}(w)) \phi_s(w)^2 \, ds + o(1)
$$
which converges to  
$$ 
4\gamma \int_0^t  \int_\RR  \ell_s(w)(1-u_s(w)) \phi_s(w)^2 \, dw \, ds\quad\text{since }r/L \to \gamma.
$$
Similarly, $\langle Z^1(\phi) \rangle_t $ converges to
$$ 
4\gamma \int_0^t  \int_\RR  (u_s(w)-\ell_s(w))\,(1-u_s(w)) \phi_s(w)^2 \, dw \, ds.
$$

{\bf Laplacian Term. } We denote the discrete gradient and the discrete Laplacian respectively by
\begin{align}
\nabla_Lf(w)& :=L\,\Big(f(w+L^{-1})-f(w)\Big)\label{dnabla}\\
\Delta_Lf(w)&:= L^2\,\Big(f(w+L^{-1})+f(w-L^{-1})-2f(w)\Big). \label{dLaplacian}
\end{align}	
We break \eqref{ig2} into an average term and a fluctuation term
\begin{align}
& (ML)^{-1} \sum_x \sum_{y\sim x} \int_0^t \xi_{s-}(y) \xi^c_{s-}(x) [\phi_s(x) - \phi_s(y)] \, r \, ds
\label{term3a}\\
& + (ML)^{-1} \sum_x \sum_{y\sim x} \int_0^t \xi_{s-}(y) \xi^c_{s-}(x) [\phi_s(y) - \phi_s(x)]  ( dP^{x,y}_s - r \, ds).
\label{term3b}
\end{align}
We can replace $\xi^c_{s-}$ by 1 in \eqref{term3a} without changing its value, 
because by symmetry, $\sum_x \sum_{y\sim x}\xi_{s-}(y) \xi_{s-}(x) [\phi_s(x) - \phi_s(y)] =0$ for all $s>0$.

Doing the double sum over $y$ and then over $x\sim y$ the
above is
$$
\frac{rM}{L^2} \cdot \frac{1}{L} \sum_w u^n_{s-}(w) \Delta_L \phi_s(w)=\frac{rM}{L^2}\<u^n_{s-},\,\Delta_L \phi_s\>.
$$
By assumption $rM/L^2 \to \alpha$, so this term converges to $\alpha\int_{\RR} u_{s}\, \Delta\phi_s$. The other term, 
\eqref{term3b}, is a martingale $E^{(1)}_t(\phi)$ with
\begin{align}
\langle E^{(1)}(\phi) \rangle_t	\leq &\, \frac{r}{(ML)^2} \sum_x \sum_{y\sim x} \int_0^t  (\phi_s(x) - \phi_s(y))^2  \, ds\\
= &\, \frac{2r}{L^3} \int_0^t \<1,\, |\nabla_L\phi_s|^2\>  \, ds \to 0
\end{align}
since $r/L \to \gamma$ and $L \to \infty$. 

Combining our calculations, we see that in the limit $n\to\infty$,
\begin{align}
 \int_{\RR}u_t(w)&\,\phi_t(w)\,dw\, -   u_0(w)\,\phi_0(w)\,dw   - \int_0^t \int_{\RR} u_s(w)\,\partial_s\phi_s(w)\,dw\,ds  
\nonumber \\
&- \int_0^t \int_{\RR} \alpha u_s(w) \Delta \phi_s(w) -  2\theta \beta \, u_s(w)(1-u_s(w))\, \phi_s(w)\,dw \, ds
\end{align}
is a martingale with quadratic variation
\beq
4\gamma \int_0^t \int_{\RR} u_s(w)(1-u_s(w)) \phi^2_s(w) \,dw \,ds
\eeq
which is the martingale problem formulation of \eqref{uSPDE}.

\subsection{Labeled Particles}

To begin, we note that
in terms of the previously defined Poisson processes,
\begin{align}
\label{Peq2}
& \eta_t(x) - \eta_0(x) 
 =\,  \sum_{y\sim x} \int_0^t (\eta_{s-}(y) - \eta_{s-}(x)) \, dP^{x,y}_s \\
&+\sum_{y\sim x} \int_0^t \eta_{s-}(y) (1-\eta_{s-}(x))
- \xi_{s-}(y)(1-\eta_{s-}(y)) \eta_{s-}(x) \, d\tilde P^{x,y}_s.
\nonumber
\end{align}
The first term gives the voter interactions. For the second term, note that if $y$ is in state $1^*$ and $x$ is not, the number of $1^*$'s will
increase by 1, while if $x$ is in state $1^*$ and $y$ is in state 1 ($\xi_{s-}(y) =1$ and $\eta_{s-}(y)=0$), the number will decrease by 1. 

Arguing as for the type 1 particles while using \eqref{Peq2} instead of \eqref{Peq}, we get
\begin{align}
\langle \ell^n_t, \phi_t \rangle &- \langle \ell^n_0 ,\phi_0 \rangle  - \int_0^t \langle \ell^n_s, \partial_s \phi_s \rangle 
\nonumber\\
& = (ML)^{-1} \sum_x  \sum_{y\sim x} \int_0^t (\eta_{s-}(y) - \eta_{s-}(x)) \phi_s(x) \, dP^{x,y}_s 
\label{lterm1}\\
&+ (ML)^{-1} \sum_x \sum_{y\sim x} \int_0^t [\eta_{s-}(y) 
- \xi_{s-}(y) \eta_{s-}(x)] \phi_s(x) \, d\tilde P^{x,y}_s, \label{lterm2}
\end{align}
where we have simplified the second term of \eqref{Peq2} using $\xi_{s-}(y)\eta_{s-}(y)=\eta_{s-}(y)$.

{\bf Drift term. }
Breaking the second term \eqref{lterm2} into an average term and a fluctuation term as before, we conclude that 
as $n \to\infty$, the average term is 
\begin{align*}
\frac{\theta M}{L R} \sum_{w\in L^{-1}\ZZ} \int_0^t \big[ & \ell^n_{s}(w+L^{-1})-u^n_s(w+L^{-1})\ell^n_s(w) \\
& + \ell^n_{s}(w-L^{-1})-u^n_s(w-L^{-1})\ell^n_s(w)\big]\phi_s(w) \, ds
\end{align*}
which converges to
$$
\theta \beta \int_0^t \int_{\RR} 2 \ell_s(w) (1-u_s(w)) \phi_s(w) \, dw \, ds.
$$ 

{\bf White noises. } We again change the integrand in \eqref{lterm1} to 
$$
\{\eta_{s-}(y)[1-\eta_{s-}(x)] - \eta_{s-}(x) [1-\eta_{s-}(y)] \}\, \phi_s(x)
$$
and then split it into two parts as in \eqref{ig1} and  \eqref{ig2}. That is,
we rewrite the integrand as 
\begin{align}
& \eta_{s-}(y)[1-\eta_{s-}(x)] \eta_s(y) - \eta_{s-}(x)[1-\eta_{s-}(y)] \phi_s(x) \label{lig1} \\
& + \eta_{s-}(y)[1-\eta_{s-}(x)] (\phi_s(x) - \phi_s(y)). \label{lig2}
\end{align}

Arguing as before, we obtain the following sum coming from \eqref{lig1}.
\begin{align}
& (ML)^{-1} \sum_x \sum_{y\sim x} \int_0^t [1-\xi_{s-}(y)] \eta_{s-}(x) \phi_s(x) \, ( dP^{y,x}_s - dP^{x,y}_s ) 
\label{ltwoNoises}
\\
+ & (ML)^{-1} \sum_x \sum_{y\sim x} \int_0^t [\xi_{s-}(y) - \eta_{s-}(y)] \eta_{s-}(x)  \phi_s(x) \, ( dP^{y,x}_s - dP^{x,y}_s ).
\nonumber
\end{align} 
The first noise is the same as $Z^0_t(\phi)$ in \eqref{noise0} while the second noise, denoted by $Z^2_t(\phi)$, has variance converging to
$$ 
4\gamma \int_0^t  \int_\RR \ell_s(w)\, (u_s(w)-\ell_s(w))\, \phi_s(w)^2 \, dw \, ds.
$$
The product of any two of the three integrands in $Z^{i}_t(\phi)$ ($i=0,1,2$) is 0, so
these three martingales are uncorrelated. 

{\bf Laplacian term. } Breaking \eqref{lig2} into an average term and a fluctuation term as was done for \eqref{ig2}, we see that as $n\to\infty$, the average term 
$$
\frac{rM}{L^2}\<\ell^n_{s-},\Delta_L \phi_s\> \to \alpha\int_{\RR} \ell_{s}\, \Delta\phi_s.
$$

Combining our calculations, we see that in the limit, for any $\phi,\,\psi\in C^{1,2}_c([0,\infty)\times \RR)$,
	\begin{align}
	& \, \int_{\RR}(u_t\,\phi_t- u_0\,\phi_0 +  \ell_t\,\psi_t -\ell_0\,\psi_0)\,dw \notag\\
	& - \alpha \int_0^t  \int_{\RR}u_s\,(\partial_s\phi_s + \Delta\phi_s) + \ell_s\,(\partial_s\psi_s+\Delta\psi_s)\,dw \,ds \notag\\
	& - 2\,\theta \,\beta\int_0^t \int_{\RR} u_s\,(1-u_s)\,\phi_s\,+\,\ell_s\,(1-u_s)\,\psi_s\,dw\,ds 
	\end{align}
is a continuous martingale with quadratic variation
	\begin{equation}
	4\,\gamma \int_0^t  \int_\RR  u_s\,(1-u_s)\, \phi_s^2 +\ell_s\,(1-\ell_s)\,\psi_s^2\,+\,2\,\phi_s\,\psi_s\,\ell_s\,(1-u_s) \, dw \, ds.
	\end{equation}
Any sub-sequential limit $(u,\,\ell)$ solves this martingale problem. It is standard (see p. 536-537 in \cite{MT95}) to show that $(u,\,\ell)$ then solves the coupled SPDE in Theorem \ref{mvLimit2} weakly, with respect to some white noises.

\clearp 
	
\section{Green's function representation}

As remarked earlier our proof follows the approach in \cite{MT95}. The first step is to prove the analogue of their (2.11).
Observe that $u^n_t(z)= \<u^n_t,\,L\,{\bf 1}_z\>$ and $\ell^n_t(z)= \<\ell^n_t,\,L\,{\bf 1}_z\>$, where ${\bf 1}_z$ is the function on $L^{-1}\ZZ$ which is $1$ at $z$ and zero elsewhere. Let $\alpha_n= r_nM_nL_n^{-2}$ which converges to $\alpha $ as $n\to\infty$ and let 
\beq
p^{n}_t(w):=L\,\P(X^{n}_{t}=w\,|\,X^{n}_0=0)
\label{tprw}
\eeq
be the transition probability of the simple random walk $(X^{(n)}_t)_{t\geq 0}$ on $L^{-1}\ZZ$ with jump rate $2L^2$, so that it converges to $p_t(w)$
the transition density of Brownian motion run at rate 2.
Let $\{P^{n}_t\}_{t\geq 0}$ be the associated semigroup which has generator the discrete Laplacian $\Delta_L$ defined in \eqref{dLaplacian}.

Applying the approximate martingale problems with test function 
\begin{equation}\label{E:testfcn}
\phi_s(w):=\phi^{t,z}_s(w):= 
\begin{cases} p^{n}_{\alpha_n(t-s)}(w-z) & \text{for }s\in[0,t], \\ 0 & \hbox{otherwise} \end{cases}
\end{equation}
and using the facts that $\partial_s\phi_s+\alpha_n\Delta_{L}\phi_s=0$ and $\<u^n_0,\phi^{t,z}_0\> = P^n_{\alpha_nt}u^n_0(z)$, we 
have 
\beq
u^n_t(z) =   P^n_{\alpha_nt}u^n_0(z) + Y_t(\phi)+ Z_t(\phi)+E^{(1)}_t(\phi) + E^{(2)}_t(\phi) \label{E:Green_n}
\eeq
for $t\geq 0$ and $z\in L^{-1}\ZZ$.
Here $Z_t(\phi)$, $E^{(1)}_t(\phi)$ and $ E^{(2)}_t(\phi)$ are martingales defined in
\eqref{term1b}, \eqref{term3b} and \eqref{term2b} respectively, with $\phi_s$ defined in \eqref{E:testfcn}. To describe the other term, we let $\beta_n=M R^{-1}$
which converges to $\beta$, and let $Y_t(\phi)$ be 
\beq
\frac{\theta \beta_n}{L} \int_0^t \sum_{w\in L^{-1}\ZZ}
[u^n_s(w-L^{-1})+u^n_s(w+L^{-1})](1-u^n_s(w)) \phi_s(w) \,ds.
\label{defY}
\eeq

Repeating the last argument for the labeled particles, 
\beq
\ell^n_t(z) = P^n_{\alpha_nt}\ell^n_0(z) + Y^{\ell}_t(\phi) + Z^{\ell}_t(\phi)+E^{\ell,1}_t(\phi) + E^{\ell,2}_t(\phi). \label{E:Green_nell}
\eeq	
Here $Z^{(\ell)}_t(\phi):=Z^{0}_t(\phi)+Z^{2}_t(\phi)$ is given by \eqref{ltwoNoises}, $E^{(\ell,2)}_t(\phi)$ is obtained by replacing  $(\xi,\,\xi^c)$ by $(\eta,\,\eta^c)$ in \eqref{term3b}, $E^{(\ell,2)}_t(\phi)$ is the fluctuation term corresponding to \eqref{lterm2}. The remaining term is
\begin{align*} 
Y^{\ell}_t(\phi) = 	\frac{\theta \beta_n}{L}\int_0^t \sum_{w\in L^{-1}\ZZ}\Big[&\ell^n_{s}(w+L^{-1})-u^n_s(w+L^{-1})\ell^n_s(w)  \\
& + \ell^n_{s}(w-L^{-1})-u^n_s(w-L^{-1})\ell^n_s(w)\Big]\,\phi_s(w)\,ds.
\end{align*}

\clearp 

\section{Tightness} \label{sec:tight}

Recall that a sequence of probability measures is said to be $C$-tight, if it is tight in $D$ and  any subsequential limit has a continuous version. 
The goal of this section is to prove: 

\begin{theorem}\label{T:Tightuell}
Suppose the assumptions in Theorem \ref{mvLimit} hold. Then the sequence $\{(u^n,\,\ell^n)\}_{n\geq 1}$ is $C$-tight in  
$D([0,T],\,C_b(\RR)\times C_b(\RR))$ for every $T>0$. 
\end{theorem}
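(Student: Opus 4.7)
\emph{Plan of proof.} Since the metric \eqref{normC} on $C_b(\RR)$ makes it Polish and convergence reduces to uniform convergence on each compact window $[-k,k]$, it suffices to prove $C$-tightness of $(u^n,\ell^n)$ in $D([0,T],\,C([-k,k])\times C([-k,k]))$ for every $k$ and $T$. Joint tightness of the pair follows automatically once each marginal is tight (and is needed anyway in order to identify limits of the coupled noise terms). By a standard Kolmogorov--Chentsov criterion, it is enough to establish a uniform moment bound
\begin{equation*}
\E\bigl[\,|u^n_t(z)-u^n_{t'}(z')|^p + |\ell^n_t(z)-\ell^n_{t'}(z')|^p\,\bigr] \;\leq\; C_{p,T,k}\,\bigl(|t-t'|\,+\,|z-z'|^{2}\bigr)^{1+\delta}
\end{equation*}
valid for some $p$ large, some $\delta>0$, uniformly in $n$, $z,z'\in[-k,k]$ and $t,t'\in[0,T]$; together with tightness of $u^n_0,\ell^n_0$ in $C_b(\RR)$, which is given by the hypothesis.

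The bound will be obtained termwise from the Green's function representations \eqref{E:Green_n} and \eqref{E:Green_nell}. For the semigroup term $P^n_{\alpha_n t}u^n_0(z)$, I would use that $u^n_0\to f_0$ in $C_b(\RR)$ together with a (by now standard) local CLT / gradient estimate showing that $p^n_{\alpha_n t}(\cdot)$ is uniformly H\"older with exponents approaching those of the continuum heat kernel. The drift terms $Y_t(\phi)$ and $Y^\ell_t(\phi)$ are bounded in absolute value by $C\int_0^t \sum_w \phi^{t,z}_s(w)L^{-1}\,ds$ using $0\le \ell^n \le u^n \le 1$, which is bounded by $C\,t$, and their differences in $(t,z)$ versus $(t',z')$ reduce, by dominated convergence, to H\"older increments of the discrete heat kernel and its time-integral. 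The small error martingales $E^{(1)},E^{(2)},E^{(\ell,1)},E^{(\ell,2)}$ carry prefactors of order $r/L^3$ or $1/(LR)$ in their predictable quadratic variations (see the calculations in Section \ref{sec:AMP}), and vanish in the scaling, so BDG gives decay in $n$ after the relevant $L^p$ bounds on $\phi$.

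The main obstacle is the genuine noise terms $Z_t(\phi)$, $Z^{\ell}_t(\phi)$. Here I would apply Burkholder--Davis--Gundy in the form $\E[|Z_t(\phi)|^{p}]\leq C_p\,\E[\langle Z(\phi)\rangle_t^{p/2}]$, reducing the problem to estimates of the type
\begin{equation*}
\int_0^{t\wedge t'}\,\sum_{w\in L^{-1}\ZZ}\,\bigl(p^n_{\alpha_n(t-s)}(w-z)-p^n_{\alpha_n(t'-s)}(w-z')\bigr)^2\, u^n_s(w)(1-u^n_s(w))\,\frac{1}{L}\,ds
\end{equation*}
plus the usual boundary contribution from $[t\wedge t',\,t\vee t']$, and analogous expressions for $\ell^n(1-u^n)$, $(u^n-\ell^n)(1-u^n)$, and $\ell^n(u^n-\ell^n)$. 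Bounding all factors by $1$ reduces everything to non-standard H\"older estimates on differences of squared discrete heat kernels $p^n_{\alpha_n s}$, uniformly in $n$; these are exactly the ``random walk estimates'' of Section 7, analogous to Lemmas 3.1--3.3 of \cite{MT95}. Once those estimates supply, say, $O(|t-t'|^{1/2-\ep}+|z-z'|^{1-\ep})$ in the quadratic variation, choosing $p$ large enough in BDG provides the Kolmogorov--Chentsov bound above with $\delta>0$.

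Finally, $C$-tightness (as opposed to only $D$-tightness) follows because all jumps of $(u^n_t,\ell^n_t)$ are of size $O(1/M_n)\to 0$; alternatively, the Kolmogorov moment bound is already in $(t,z)$-increments so any subsequential limit is automatically continuous. Joint $C$-tightness of the pair follows from marginal $C$-tightness together with the fact that the pair lives in a product Polish space. The hard step is unquestionably the discrete heat kernel H\"older estimates feeding the BDG bound on $Z$ and $Z^{\ell}$; everything else is essentially a book-keeping adaptation of \cite{MT95} to the coupled $(u,\ell)$ system.
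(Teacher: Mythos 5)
Your overall architecture matches the paper's: reduce to a modulus-of-continuity estimate on compacts, use the Green's function representations \eqref{E:Green_n} and \eqref{E:Green_nell}, treat the semigroup and drift terms by heat-kernel regularity, kill the error martingales using their vanishing brackets, and control the genuine noise terms $Z$, $Z^{\ell}$ by Burkholder--Davis--Gundy plus the random walk estimates of Section 7. That much is the same route.

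There is, however, a genuine gap at the central step. The uniform-in-$n$ Kolmogorov--Chentsov bound you propose, $\E|u^n_t(z)-u^n_{t'}(z')|^p\le C(|t-t'|+|z-z'|^2)^{1+\delta}$, cannot hold for the discrete processes themselves: they are pure jump processes, and for a compensated Poisson integral with jump size $\sim M^{-1}$ one has $\E|X_{t}-X_{t'}|^p\gtrsim M^{-p}\cdot(\text{rate})\cdot|t-t'|$ for small $|t-t'|$, which is $O(|t-t'|)$ but never $O(|t-t'|^{1+\delta})$ uniformly down to $t'=t$. Equivalently, the form of BDG you invoke, $\E|X_t|^p\le C_p\E\langle X\rangle_t^{p/2}$, is false for discontinuous martingales when $p>2$; the correct version \eqref{E:BDG} carries the additional term $\E[\sup_s|X_s-X_{s-}|^p]$, which here contributes an irreducible $M^{-p}$. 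This is exactly why the paper's Lemma \ref{L:Moment_hatu} reads $C_{T,p}(|t_1-t_2|^{p/4}+|z_1-z_2|^{p/2}+M^{-p})$ rather than your clean bound, and why it then needs a further idea you do not supply: replace $\widehat u^n$ by the continuous piecewise-linear interpolation $\tilde u^n$ on a time grid of mesh $\theta_n>M^{-4}$, prove the clean Kolmogorov bound for $\tilde u^n$ (Lemma \ref{L:Moment_tildeu}, where the grid spacing is used to absorb the $M^{-p}$ error), and finally show $\sup_{t\le T}\|\tilde u^n_t-\widehat u^n_t\|\to 0$ in probability. Your closing remark that all jumps are $O(1/M_n)$ correctly identifies why one expects $C$-tightness, but it does not repair the moment bound; without the interpolation (or some equivalent device) the Kolmogorov--Chentsov criterion you rely on simply does not apply to $(u^n,\ell^n)$.
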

	
\begin{proof}	
The compact containment condition (condition (a) of Theorem 7.2 in \cite[Chapter 3]{EK86}) holds trivially as $0\leq \ell^n\leq u^n\leq 1$. Since $C_b(\RR)\times C_b(\RR)$ is equipped with the product metric, the desired $C$-tightness follows once we can show that for any $\epsilon>0$, one has
\begin{align}
& \lim_{\delta\to 0} \limsup_{n\to\infty}\,\P\,\bigg( \sup_{\substack{t_1-t_2<\delta\\0\leq t_2\leq t_1\leq T}}
\big\|u^n_{t_1}-u^n_{t_2} \big\|\,>\epsilon \bigg) = 0,
\label{E:Tightu}\\
& \lim_{\delta\to 0}\limsup_{n\to\infty}\,\P\,\bigg( \sup_{\substack{t_1-t_2<\delta\\0\leq t_2\leq t_1\leq T}}
\big\|\ell^n_{t_1}-\ell^n_{t_2} \big\|\,>\epsilon \bigg) = 0.
\label{E:Tightell}
\end{align}	
Here and in what follows the norm is the one defined in \eqref{normC}. 
It is enough to show that \eqref{E:Tightu} holds with $u^n$ replaced by any term in the decomposition given in \eqref{E:Green_n}, 
and that \eqref{E:Tightell} holds with $\ell^n$ replaced by any term in \eqref{E:Green_nell}.

\mn
{\bf First term in \eqref{E:Green_n} and \eqref{E:Green_nell}. } 
By standard coupling arguments for simple random walk, we can check as in Lemma 7(b) of \cite{MT95} that, upon linearly interpolating  
$ P^n_{\alpha_nt}u^n_0(z)$ in space, we have
\begin{equation}
\label{E:Semigroups}
\sup_{t\in[0,T]}\| P^n_{\alpha_nt}u^n_0 -P_{\alpha_n\,t}f_0\| \to 0 \quad\text{as }n\to\infty,
\end{equation}
where $\{P_t\}_{t\geq 0}$ is the semigroup for the Brownian motion in $\RR$ running at rate 2, and $f_0$ is the initial condition for $u$ functions in Theorem  \ref{mvLimit2}. This implies, by the continuity of the semigroup $P_t$, that \eqref{E:Tightu} holds with $u^n_t$ replaced by $P^n_{\alpha_nt}u^n_0$. By the same reasoning, we have
\begin{equation}\label{E:Semigroups_ell}
\sup_{t\in[0,T]}\| P^n_{\alpha_nt}\ell^n_0 -P_{\alpha_n\,t}g_0\| \to 0 \quad\text{as }n\to\infty 
\end{equation}
where $g_0$ is the initial condition for $\ell$ functions in Theorem  \ref{mvLimit2}.
		
\mn 
{\bf Remaining terms in \eqref{E:Green_n} and \eqref{E:Green_nell}. } For simplicity, we write 
\begin{align*}
\widehat{u}^n_t(z)&:=Y_t(\phi) + Z_t(\phi)+E^{(1)}_t(\phi) + E^{(2)}_t(\phi), \\
\widehat{\ell}^n_t(z)&:=  Y^{(\ell)}_t(\phi) + Z^{(\ell)}_t(\phi)+E^{(\ell,1)}_t(\phi) + E^{(\ell,2)}_t(\phi).
\end{align*}
The next moment estimate for space and time increments is similar to Lemma 6 in \cite{MT95}, but ours implies 
H\"older continuity of the limits with exponent $<1/2$ in space and $<1/4$ in time. 
\begin{lemma}
\label{L:Moment_hatu}
For any $p\geq 2$ and $T\geq 0$, there exists a constant $C(T,p)>0$  such that
\begin{align}
\label{E:Moment_hatu}
&\E |\widehat{u}^n_{t_1}(z_1)- \widehat{u}^n_{t_2}(z_2)|^p \leq C_{T,p}\,\Big(|t_1-t_2|^{p/4}+|z_1-z_2|^{p/2}+ M^{-p} \Big) \\
\label{E:Moment_hatell}
&\E |\widehat{\ell}^n_{t_1}(z_1)- \widehat{\ell}^n_{t_2}(z_2)|^p \leq C_{T,p}\,\Big(|t_1-t_2|^{p/4}+|z_1-z_2|^{p/2}+ M^{-p} \Big) \qquad
\end{align}
for all $0\leq t_2\leq t_1\leq T$, $z_1,\,z_2\in L^{-1}\ZZ$ and $n\geq 1$.
\end{lemma}

The proof of this result is postponed to the next section since it requires a number of computations. We now argue that \eqref{E:Moment_hatu} implies
\eqref{E:Tightu} holds for $\widehat{u}^n$. This idea is described in the paragraph before Lemma 7 in \cite{MT95} and page 648 of \cite{Kleim}: we approximate the c\`adl\`ag process $\widehat{u}^n$ by  a {\it continuous} process $\tilde{u}$ and invoke a tightness criterion inspired by Kolmogorov's continuity theorem. 
\begin{lemma}
\label{L:Moment_tildeu}
Define $\tilde{u}^n\in C([0,\infty),\,C_b(\RR))$ by $\tilde{u}_t=\widehat{u}_t$ on the grid $t\in \theta_n \,\ZZ_+$ and then linearly interpolate in $t$ for each $w\in\RR$. Suppose $\theta_n> M^{-4}$ and $\lim_{n\to\infty}\theta_n=0$. Then there exists $n_0\in\mathbb{N}$ such that for any $p\geq 2$, $T\geq 0$ and $K\geq 0$,
\begin{equation}\label{E:Moment_tildeu}
\E |\tilde{u}^n_{t_1}(z_1)- \tilde{u}^n_{t_2}(z_2)|^p \leq C_{T,p,K}\Big(|t_1-t_2|^{p/4}+|z_1-z_2|^{p/2} \Big),
\end{equation}
for all $0\leq t_2\leq t_1\leq T$, $z_i\in \RR$ with $|z_i|\leq K$ ($i=1,\,2$) and $n\geq n_0$. 
\end{lemma}
	 	
By a standard argument (see, for example, Problems 2.2.9 and 2.4.11 of \cite{KS91}), one can show that \eqref{E:Moment_tildeu} implies that \eqref{E:Tightu} holds when $u$ is replaced by $\tilde{u}^n$. Finally, by the reasoning in the proof of Lemma 7(a) in \cite{MT95}, there is a $\sigma>0$  so that 
$$
\limsup_{n\to\infty}\,\P\,\Big(\sup_{t\in[0,T]}\|\tilde{u}^n_{t}- \widehat{u}^n_{t}\| \geq n^{-\sigma}\Big)\,= 0.
$$
Therefore \eqref{E:Tightu} holds for $\widehat{u}^n$. By the same argument, \eqref{E:Moment_hatell} implies
\eqref{E:Tightell} holds for $\widehat{\ell}^n$. Theorem \ref{T:Tightuell} will be proved once Lemmas \ref{L:Moment_hatu} and \ref{L:Moment_tildeu} are. 	
\end{proof}

\clearp 

\section{Proofs of Lemmas  \ref{L:Moment_hatu} and \ref{L:Moment_tildeu}}

\begin{proof}[Proof of Lemma \ref{L:Moment_hatu}]
We prove only \eqref{E:Moment_hatu} for unlabeled particles. The proof of \eqref{E:Moment_hatell} for labeled particles is similar.
The basic ingredients are the following estimates of time and space increments of the 
transition probability of simple random walk. Namely, there exists a constant $C>0$  {\it independent of} $n$ such that
\begin{align}
&0\,\leq\, \int_0^{T} p^n_s(0)\,ds \leq  C\,\sqrt{T},
\label{E:LCLT0}\\
&0\,\leq\, \int_0^{\infty} p^n_s(0)-p^n_{s+\theta}(0)\,ds \leq  C\,\sqrt{\theta},
\label{E:LCLT1}\\
& 0\,\leq\,\int_0^{\infty} p^n_s(0)-p^n_s(z)\,ds \leq  C\,|z| 
\label{E:LCLT2}\\
&\int_0^{T} \frac{1}{L}\sum_{w}|p^n_{s+\theta}(w)-p^n_{s}(w)|\,ds \;\leq \; C\,\sqrt{T\,\theta},
\label{E:LCLT3}\\
&\int_0^{T} \frac{1}{L}\sum_{w} |p^n_{s}(w)-p^n_{s}(z+w)|\,ds\;\leq\;  C\,\sqrt{T}\,|z|
\label{E:LCLT4}
\end{align}
for $\theta\geq 0$, $z\in L^{-1}\ZZ$ and $T>0$. These estimates can be either found or deduced from the 
standard methods described in Chapter 2 in \cite{LL10}. For completeness, we give precise references
and missing details in the next section.

We will show that each of the four terms of $\widehat{u}^n$ satisfies \eqref{E:Moment_hatu}.
To simplify notation, we assume, without loss of generality for the proof, that $\alpha_n\equiv 1$. 
First, we deal with the process $Y$ that has no jumps. To reduce the size of the formulas we let
$$
v^n_s(w) = [u^n_s(w-L^{-1}) + u^n_s(w+L^{-1})] (1-u^n_s(w)).
$$
Using the definitions of $Y$ \eqref{defY} and of our test function \eqref{E:testfcn} we have for $t_1>t_2$
\begin{align*}
 Y_{t_1}(\phi^{t_1,z_1})& -  Y_{t_2}(\phi^{t_2,z_2}) =  \theta\beta_n \int_{t_2}^{t_1} 
\frac{1}{L} \sum_{w} v^n_s(w) p^n_{t_1-s}(z_1-w) \,ds \\
& + \theta\beta_n \int_0^{t_2} \frac{1}{L} \sum_{w} v^n_s(w) [p^n_{t_1-s}(z_1-w)-p^n_{t_2-s}(z_2-w)] \,ds \\
& \equiv \theta\beta_n(\Theta_1(Y) + \Theta_2(Y)).
\end{align*}
The sums are over $w\in L^{-1}\ZZ$, which we have omitted to simplify notation. 
Since $0\leq u^n\leq 1$ and $L^{-1}\sum_w p^n_t(z-w)=1$ (recall the definition of $p^n$ in \eqref{tprw}), we have
\beq
\E |\Theta_1(Y)|^p \leq 2^p\,(t_1-t_2)^p. \label{Y1}
\eeq	
by  H\"older's inequality.

By the triangle inequality and the translation invariance and symmetry of the transition density, we obtain
\begin{align*}
&\int_0^{t_2} \frac{1}{L}\sum_{w}  |p^n_{t_1-s}(z_1-w)-p^n_{t_2-s}(z_2-w)|\,ds \\
&\leq\, \int_0^{t_2} \frac{1}{L}\sum_{w} \big(|p^n_{t_1-s}(z_1-z_2+w) - p^n_{t_1-s}(w)|+|p^n_{t_1-s}(w)-p^n_{t_2-s}(w)|\big) \,ds\\
& \leq C_T|z_1-z_2| + C\sqrt{t_1-t_2}
\end{align*}		
by \eqref{E:LCLT4} and \eqref{E:LCLT3}. Since $0\leq v^n \leq 2$, 
\beq
\label{Y2}
\E\big[\,|\Theta_2(Y)|^p\,\big]\leq  C_{T,p}\, \big( \sqrt{t_1-t_2}+ |z_1-z_2| \big)^{p}
\eeq
for all $0\leq t_2\leq t_1\leq T$, $z_1,\,z_2\in L^{-1}\ZZ$ and $n\geq 1$.	
	
It remains to consider  $E^{(1)}_t(\phi)$, $ E^{(2)}_t(\phi)$ and $Z_t(\phi)$. Note that for each of them, the largest possible jump is bounded almost surely by 
$$
2(ML)^{-1}\sup_{s\geq 0} \|\phi_s\|_{\infty}\leq 2\,M^{-1}, 
$$ 
because $\phi_s$, as defined in \eqref{E:testfcn}, is bounded by $L$.

We shall employ a version of the Burkholder-Davis-Gundy inequality stated at the bottom of page 527 in \cite{MT95}. 
Namely, for any c\`adl\`ag martingale $X$ with $X_0=0$ and for $p\geq 2$,
\begin{equation}\label{E:BDG}
\E[\,\sup_{s\in[0,t]}|X_s|^p\,]\leq C(p)\,\E\Big[\,\<X\>_t^{p/2}+\sup_{s\in[0,t]}|X_s-X_{s-}|^p\,\Big],\quad t\geq 0. 
\end{equation}
Writing $N^{x,y}_s$ for the compensated Poisson process $P^{x,y}_s - r_ns$, 
and $\sum_{x, y\sim x}$ for the double sum $\sum_x \sum_{y\sim x}$, we decompose  
\begin{align*}
 E^{(1)}_{t_1}&(\phi^{t_1,z_1})  - E^{(1)}_{t_2}(\phi^{t_2,z_2})\\			
&=  \frac{1}{ML} \sum_{x, y\sim x} \int_{t_2}^{t_1} \xi_{s-}(y)\xi^c_{s-}(x) \,\big(p^{n}_{t_1-s}(z_1-y)-p^{n}_{t_1-s}(z_1-x) \big)\,dN^{x,y}_s\\
& +  \frac{1}{ML} \sum_{x, y\sim x} \int_{0}^{t_2}  \xi_{s-}(y)\xi^c_{s-}(x) \,\big(p^{n}_{t_1-s}(z_1-y)-p^{n}_{t_1-s}(z_1-x)\\
& \hphantom{MMM\sum_{x\sim y} \int_{0}^{t_2}  \xi_{s-}(y)\xi^c_{s-}(x)}
-p^{n}_{t_2-s}(z_2-y)+p^{n}_{t_2-s}(z_2-x) \big)\,dN^{x,y}_s\\
& \equiv\Theta_1(E^{(1)}) \,+\,\Theta_2(E^{(1)}).
\end{align*}
Writing $\tilde N^{x,y}_s$ for the compensated Poisson process $\tilde P^{x,y}_s - \theta R^{-1}_ns$ we have
\begin{align*}
 E^{(2)}_{t_1}&(\phi^{t_1,z_1}) - E^{(2)}_{t_2}(\phi^{t_2,z_2})\\			
&=  \frac{1}{ML} \sum_{x, y\sim x} \int_{t_2}^{t_1} \xi_{s-}(y)(1-\xi_{s-}(x)) \, p^{n}_{t_1-s}(z_1-x) \big)\,d\tilde N^{x,y}_s\\
 +  &\frac{1}{ML} \sum_{x, y\sim x} \int_{0}^{t_2}  \xi_{s-}(y)(1-\xi_{s-}(x)) [p^{n}_{t_1-s}(z_1-x)-p^{n}_{t_2-s}(z_2-x)] \,d\tilde N^{x,y}_s\\
& \equiv\Theta_1(E^{(2)}) \,+\,\Theta_2(E^{(2)}).
\end{align*}
Finally, writing $Q^{x,y}_s = N^{y,x}_s-N^{x,y}_s$ we have
\begin{align*}
Z_{t_1}&(\phi^{t_1,z_1}) - Z_{t_2}(\phi^{t_2,z_2})\\
&=  \frac{1}{ML} \sum_{x, y\sim x} \int_{t_2}^{t_1} (1-\xi_{s-}(y))\xi_{s-}(x) p^{n}_{t_1-s}(z_1-x) \,dQ^{x,y}_s\\
+ &\frac{1}{ML} \sum_{x, y\sim x} \int_{0}^{t_2} (1-\xi_{s-}(y))\xi_{s-}(x) [p^{n}_{t_1-s}(z_1-x)-p^{n}_{t_2-s}(z_2-x)]\,dQ^{x,y}_s\\
&\equiv \Theta_1(Z) \,+\,\Theta_2(Z).
\end{align*}

Once we use $0 \le \xi \le 1$ to simplify the integrands the three expressions have a similar structure. $E^{(1)}$ will be the smallest
since it has a difference of transition densities at adjacent sites, so we begin by estimating $E^{(2)}$.
To estimate $\Theta_1(E^{(2)})$, we let
$$
X^1_t = \frac{1}{ML} \sum_{x,y\sim x} \int_{0}^{t} \xi_{s-}(y)\xi^c_{s-}(x) p^{n}_{t_1-t_2-s}(z_1-x)\,d\tilde{N}^{x,y}_s.
$$
By Markov property of $(\xi_t)_{t\geq 0}$ and the stationarity of the compensated Poisson process, 
$$
\E(|\Theta_1(E^{(2)})|^p) =\E\,\E^{\xi_{t_2}}(|X_{t_1-t_2}|^{p}),
$$
where $\E^{\xi_{t_2}}$ is the expectation w.r.t.~the law of $\xi$ starting at $\xi_{t_2}$. 

To prepare for the next calculation we note that using the symmetry of the transition density and the Chapman-Kolmogorov equation
\beq
L^{-1}\sum_{w} [p^n_{u}(w)]^2  = L^{-1}\sum_{w} p^n_{u}(w)p^n_u(-w) = p_{2u}(0).
\label{CKtrick}
\eeq
The predictable bracket process of $X^1$ is
$$
\<X^1\>_t = \frac{\theta}{R_n (ML)^{2}} \sum_{x, y\sim x} \int_{0}^{t} \xi_{s-}(y)\xi^c_{s-}(x) [p^{n}_{t_1-t_2-s}(z_1-x)]^2\,ds.
$$
Since there are $2M$ values of $y$ for each $x$ and $M$ values of $x$ for each $w \in L^{-1}\ZZ$, if we let $c_n = 2\theta/R_nL_n$
then the above is  
\begin{align*}
&\leq c_n \int_0^{t} L^{-1}\sum_{w} \big(p^n_{t_1-t_2-s}(w)\big)^2\,ds
\\
&= c_n \int_0^{t} p^{n}_{2(t_1-t_2-s)}(0) \,ds \le c_n\,C\, (t_1-t_2)^{1/2}
\end{align*}
for $t\in[0,\,t_1-t_2]$,
where in the second step we have used the \eqref{CKtrick} and in the last step we used \eqref{E:LCLT0},
Note that $c_n\to 0$ since  $R_nL_n \to\infty$. 

If we do the calculation for $Z$ then $c_n = 2r_n/L \to 2\gamma$ so we get the same upper bound.
In $E^{(1)}$, $c_n = r_n/L_n \to \gamma$ but  
we have $p^{n}_{t_1-s}(z_1-y) -  p^{n}_{t_1-s}(z_1-x)$ instead of a single $p$, so 
\begin{align*}
L^{-1}\sum_{w} \big[p^n_{t_1-t_2-s}(w) - p^n_{t_1-t_2-s}(w-L^{-1})\big]^2& \\
 = 2p^n_{2(t_1-t_2-s)}(0) - 2p^n_{2(t_1-t_2-s)}(L^{-1}).&
\end{align*}
If we use \eqref{E:LCLT4} now we would get an upper bound of $C_TL^{-1}$ when we integrate the above expression with respect to $ds$ from $s=0$ to $s=T$.
Therefore, by \eqref{E:BDG}, we have
\beq
\E(|\Theta_1|^p )  \leq C_{p} ( |t_1-t_2|^{p/4} + M^{-p} ). \label{bd1}
\eeq
for $E^{(1)}$, $E^{(2)}$ and $Z$.

Similarly, $\E(|\Theta_2(E^{(2)})|^p) =\E(|X^2_{t_2}|^{p})$, where
$$
X^2_t = \frac{1}{ML} \sum_{x, y\sim x} \int_0^t  \xi_{s-}(y)\xi^c_{s-}(x) [p^{n}_{t_1-s}(z_1-x)-p^{n}_{t_2-s}(z_2-x)]\,d\tilde N^{x,y}_s\\
$$		
is a c\`adl\`ag martingale for $t\leq t_2$ with predictable bracket process
$$
\<X^2\>_t \leq  c_n \,\int_0^{t} L^{-1}\sum_{w} [p^{n}_{t_1-s}(z_1-w)-p^{n}_{t_2-s}(z_2-w)]^2.
$$
Arguing as before using \eqref{CKtrick} we get
$$
\leq c_n \int_0^{t} p^n_{2(t_1-s)}(0) + p^n_{2(t_2-s)}(0) - 2p^n_{t_2+t_1-2s}(z_1-z_2)
$$
a result that also holds for $E^{(1)}$ and $Z$.
Adding and subtracting $2p^n_{t_2+t_1-2s}(0)$ and using \eqref{E:LCLT1} and \eqref{E:LCLT2} the above is at most
$$
C \sqrt{t_1-t_2} + C_T |z_1-z_2|).
$$
Using \eqref{E:BDG} now, we have
\beq
\E(|\Theta_2|^p )  \leq C_{p,T} ( |t_1-t_2|^{p/4} + |z_1-z_2|^{p/2} + M^{-p} ) \label{bd1}
\eeq
which holds for $E^{(1)}$, $E^{(2)}$ and $Z$ and the proof is complete.
\end{proof}

\begin{proof}[Proof of Lemma \ref{L:Moment_tildeu}]
It suffices to consider the case $|t_1-t_2|^{1/4}  \leq M^{-1}$ and $|z_1-z_2|^{1/2}\leq M^{-1}$, since otherwise  Lemma \ref{L:Moment_hatu} easily implies \eqref{E:Moment_tildeu}. The triangle inequality gives
\beq
\label{E:Moment_tildeu2}
|\tilde{u}^n_{t_1}(z_1)- \tilde{u}^n_{t_2}(z_2)| \leq |\tilde{u}^n_{t_1}(z_1)- \tilde{u}^n_{t_2}(z_1)|+|\tilde{u}^n_{t_2}(z_1)- \tilde{u}^n_{t_2}(z_2)|.
\eeq
We first estimate the time difference on the right. Write $s_k:=k\theta_n$ for $k\in \ZZ_+$. Since $|t_1-t_2|\leq M^{-4}<\theta_n$, we have either $s_k\leq t_2<t_1 \leq s_{k+1}$ for some $k$ or $t_2<s_k<t_1$ for some $k$. Since $\tilde{u}^n$ is linear between grid points, in either case
$$
|\tilde{u}^n_{t_1}(z)- \tilde{u}^n_{t_2}(z)|
\leq 2\Big(|\widehat{u}^n_{s_{k+1}}(z)- \widehat{u}^n_{s_k}(z)| \vee |\widehat{u}^n_{s_k}(z)- \widehat{u}^n_{s_{k-1}}(z)|\Big)\,\frac{|t_1-t_2|}{\theta_n}.
$$
Hence Lemma \ref{L:Moment_hatu}, the assumption $M^{-4}<\theta_n$ and $|t_1-t_2|\leq M^{-4}$ imply that
\begin{align*}
\E |\tilde{u}^n_{t_1}(z)- \tilde{u}^n_{t_2}(z)|^p &\leq C_{T,p}\,\frac{|t_1-t_2|^p}{\theta_n^p}\big(\theta_n^{p/4}+M^{-p}\big) \\
&\leq C_{T,p}\,\Big(\frac{|t_1-t_2|}{\theta_n^{3/4}}\Big)^p \leq C_{T,p}\,|t_1-t_2|^{p/4}.
\end{align*}	 		
Next, we estimate the space difference on the right of \eqref{E:Moment_tildeu2}. Take $n$ large so that $M^{-1}< (1+\gamma) L^{-1}$ and $(1+\gamma)^{2} L^{-2}<L^{-1}$ (this is possible by our assumptions on scalings, even if $\gamma=0$). Then $|z_1-z_2|<(1+\gamma)^{2} L^{-2}< L^{-1}$. By almost the same argument used above, we easily obtain
\begin{align*}
\E |\tilde{u}^n_{t}(z_1) &- \tilde{u}^n_{t}(z_2)|^p \leq C(T,p)\,|z_1-z_2|^p\,L^p\big(L^{-p/2}+M^{-p}\big) \\
&\leq  C(T,p)\,|z_1-z_2|^p\,(L^{p/2}+(1+\gamma)^p)\\
&\leq C(T,p)\,\Big[\,(1+\gamma)^{p/2}\,|z_1-z_2|^{3p/4}+(1+\gamma)^p\,|z_1-z_2|^p\,\Big].
\end{align*}
The proof of Lemma \ref{L:Moment_tildeu} is complete.
\end{proof}

\clearp 

\section{Random walk estimates}

The first two, \eqref{E:LCLT0} and \eqref{E:LCLT1}, follow directly from the local central limit theorem (LCLT) (see, for example, 
Proposition 2.5.6 in \cite{LL10}). \eqref{E:LCLT3} follows from \eqref{E:LCLT4} and the Chapman-Kolmogorov equation: the integrand can be written as
$$
p^n_{s+\theta}(w)-p^n_{s}(w)=\frac{1}{L}\sum_{z\in L^{-1}\ZZ}p^n_{\theta}(z)\,\big(p^n_s(w-z)-p^n_s(w)\big).
$$
It remains to prove \eqref{E:LCLT2} and \eqref{E:LCLT4}.

By scaling, $p^n_t(w)=L\,p_{L^2t}(Lw)$ where $p_t(k)$ is the transition density of the simple random walk on $\ZZ$. The integral of \eqref{E:LCLT2} is therefore
$$\frac{1}{L}\int_0^{\infty} p^n_s(0)-p^n_s(Lz)\,ds.$$ 
Splitting this integral into two parts according to whether  $s\leq L|z|^2$ or $s > L|z|^2$, the first part is bounded by $L^{-1}\int_0^{L|z|^2} p^n_s(0)\,ds\leq  C|z|/\sqrt{L}$ according to \eqref{E:LCLT0}. The second part is bounded by $C|z|$ by the LCLT.

Formula \eqref{E:LCLT4} is similar to Proposition 2.4.1 in \cite{LL10} which says that 
\begin{equation}\label{2.4.1Lawer}
\sum_{k\in \ZZ}\big|q_m(k)-q_m(k+j)\big|\leq \frac{C\,j}{\sqrt{m}},
\end{equation}
where $q_m(k)$ is  the transition density for the discrete time simple random walk on $\ZZ$. Hence, by using scaling and an independent Poisson process $N_t$, we rewrite the left hand side of \eqref{E:LCLT4} as
\begin{align*}
&\frac{1}{L^2}\int_0^{L^2T}\sum_{k\in \ZZ} |p^n_{s}(k)-p^n_{s}(k+Lz)|\,ds\\
=&\,\frac{1}{L^2}\int_0^{L^2T}\sum_{k\in \ZZ} \big|\sum_{m=0}^{\infty}\P(N_s=m)\big(q_m(k)-q_m(k+Lz)\big)\big|\,ds
\end{align*}
which is at most
$$\frac{C\,|z|}{L}\int_0^{L^2T}\sum_{m=0}^{\infty}\frac{\P(N_s=m)}{\sqrt{m}}\,ds$$
by \eqref{2.4.1Lawer}. Arguing as in the proof of Proposition 2.5.6 in \cite{LL10} by using Proposition 2.5.5 (the LCLT for Poisson processes), we obtain that the integral is of order $\sqrt{L^2T}$ and hence \eqref{E:LCLT4} holds.

\clearp 

\section{Proof of Lemma \ref{SPDEunique}} \label{sec:unique}

To prepare for the proof for the SPDE, we begin by considering the diffusion process
\beq
dU  = \beta U(1-U) \, dt + \sigma \sqrt{U(1-U)} \, dB.
\eeq
Following the approach of Doering, Mueller and Smereka \cite{DMS}, we
change variables $Z=1-U$ to get (recall $dZ = -dU$)
\beq
dZ = -\beta Z(1-Z) \, dt - \sigma \sqrt{Z(1-Z)} \, dB.
\label{Zspde}
\eeq
The minus in front of the diffusion term is not important here but it will be in the next calculation in \eqref{jtSDE}.
Using It\^o's formula and ignoring the martingale terms
\begin{align*}
\hbox{drift}(Z^m) & = m Z^{m-1}(-\beta Z(1-Z))+ m(m-1) Z^{m-2}  \frac{\sigma^2}{2} Z(1-Z) \\
& = \beta m [ Z^{m+1} - Z^m ] + \frac{\sigma^2 m(m-1)}{2} [Z^{m-1} - Z^m] ).
\end{align*} 
Let $N(t)$ be a Markov process with $Q$ matrix
\begin{align}
& Q_{m,m+1} = \beta m \qquad Q_{m,m-1} = \sigma^2 \frac{m(m-1)}{2} 
\nonumber \\
&Q_{m,m} =  -\beta m - \sigma^2 \frac{m(m-1)}{2}. \label{Qmatrix}
\end{align}
Combining our calculations,
$$
\frac{d}{dt} EZ^m = \sum_n Q_{m,n} EZ^n
$$
Letting $P_{\ell,m}(t)=\P(N(t)=m\,|\,N(0)=\ell)$ be the transition probabilities, we have

\begin{lemma} \label{mart}
For fixed $T>0$ and $\ell \ge 1$, 
$M_t = \sum_{m=1}^\infty P_{\ell,m}(T-t)Z^m(t)$ is a martingale. 
\end{lemma}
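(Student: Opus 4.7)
The plan is to compute $dM_t$ via It\^o's formula and show that the drift cancels identically, leaving only a bounded local martingale; since $|M_t|\le 1$, this upgrades automatically to a true martingale.

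The calculation immediately preceding the lemma already gives, for each fixed $m\ge 1$,
\begin{equation*}
dZ_t^m \,=\, \Big(\sum_{n} Q_{m,n}\, Z_t^n\Big)\, dt \,+\, dJ_t^m,
\end{equation*}
with $J^m$ a martingale and only the three indices $n\in\{m-1,m,m+1\}$ contributing to the sum. Writing $g_t(m):=P_{\ell,m}(T-t)$, the forward Kolmogorov equation for $N$ gives $\partial_t g_t(m) = -\sum_k g_t(k)\, Q_{k,m}$. Formally applying the product rule to $M_t = \sum_m g_t(m)\,Z_t^m$,
\begin{align*}
dM_t \,&=\, \sum_m \big[\partial_t g_t(m)\big]\, Z_t^m\, dt \,+\, \sum_m g_t(m)\, dZ_t^m \\
&=\, \Big[-\sum_m \sum_k g_t(k)\, Q_{k,m}\, Z_t^m \,+\, \sum_m g_t(m)\, \sum_n Q_{m,n}\, Z_t^n\Big] dt \,+\, d\mathcal{M}_t,
\end{align*}
and swapping the dummy indices $k\leftrightarrow m$ in the first double sum shows that the bracket vanishes identically.

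To make this rigorous I would carry the calculation out first for the truncations $M_t^{(K)} := \sum_{m=1}^K g_t(m)\, Z_t^m$, which are honest semimartingales, and then let $K\to\infty$. The key input is non-explosivity of $N$: since $Q_{m,m+1}=\beta m$ is Yule-type, the chain is dominated by a Yule process and so has finite moments of all orders on $[0,T]$. In particular $\sum_{m\ge 1} g_t(m)=1$ and $\sup_{t\le T}\sum_m m^2\, g_t(m)<\infty$, which, combined with $|Z_t|\le 1$, makes every series in the display above absolutely convergent and controls the truncation error in the drift uniformly in $t$. The limiting martingale part $\mathcal{M}$ is well-defined as the $L^2$ limit of $\mathcal{M}^{(K)}$ by dominated convergence applied to the quadratic variations.

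The only genuinely delicate point is this uniform-in-$t$ tail control needed to interchange the infinite sum with $\partial_t$ and with the It\^o differential; given the Yule-domination moment bounds on $N$ it is routine. With the cancellation of drifts in hand, $M_t - M_0 = \mathcal{M}_t$ is a local martingale bounded by $1$, hence a martingale on $[0,T]$.
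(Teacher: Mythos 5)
Your proof is correct, and the heart of it is the same index-swap cancellation the paper uses: the backward-in-time derivative of $g_t(m)=P_{\ell,m}(T-t)$ (via the Kolmogorov equation for $N$) cancels against the generator acting on the moments $Z^m$ after relabeling the dummy indices. The difference is in execution. The paper differentiates the \emph{expectation}, showing only $\frac{d}{dt}\E M_t=0$, and leaves implicit both the interchange of $\sum_m$ with $\frac{d}{dt}$ and the appeal to the Markov property needed to upgrade constancy of the mean to $\E[M_t\mid \mathcal{F}_s]=M_s$. You instead carry out the cancellation pathwise in the semimartingale decomposition, which yields the martingale property directly once the drift vanishes, and you supply the missing analytic justifications: truncation to $M^{(K)}$, non-explosivity and moment bounds for $N$ via Yule domination (the quadratic death rates only help here, and $\sup_{t\le T}\sum_m m^2 g_t(m)<\infty$ is exactly what is needed since $Q_{m,n}=O(m^2)$), and the bound $|M_t|\le\sum_m g_t(m)=1$ to pass from local martingale to martingale. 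So your route is a more careful, self-contained version of the same idea; what the paper's version buys is brevity, at the cost of leaving the limit interchanges and the conditional-expectation step to the reader.
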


\begin{proof}
Differentiating we have
\begin{align*}
\frac{d}{dt} EM_t & = \sum_m EZ^m(t) \frac{d}{dt} P_{\ell,m}(T-t) + P_{\ell,m}(T-t) \frac{d}{dt} EZ^m(t) \\
& = \sum_m  - EZ^m(t) \sum_n P_{\ell,n}(T-t) Q_{n,m} \\& + P_{\ell,m}(T-t) \sum_n Q_{m,n} E Z^n(t) =0
\end{align*}
if we interchange the roles of $m$ and $n$ in the second sum.
\end{proof} 

From Lemma \ref{mart} we get
\beq
EZ^{N(0)}(T) =EZ^{N(T)}(0).
\label{Zdual}
\eeq

Now consider the system
\begin{align}
dZ & = -\beta Z(1-Z) \, dt - \sigma\sqrt{VZ} \, dB^0 - \sigma\sqrt{Z(1-Z-V)} \, dB^1,
\label{jtSDE} \\
dV & = \beta VZ \, dt + \sigma\sqrt{VZ} \, dB^0 + \sigma\sqrt{V(1-V-Z)} \, dB^2,
\nonumber
\end{align}
where the $B^i$ are independent Brownian motions. 
To get our second dual function, we consider $Y_n = \sum_{m=1}^n Z^{m-1} V$. Using It\^o's formula
and for the second term recall \eqref{Zspde},
\begin{align*}
\hbox{drift}\left( \sum_{m=1}^n Z^{m-1} V \right) & = V \sum_{m=2}^n (m-1) Z^{m-2} (-\beta Z(1-Z)) \\
&+ \frac{\sigma^2}{2} V \sum_{m=3}^n (m-1)(m-2) Z^{m-3} Z(1-Z) \\
&+ \sum_{m=1}^n Z^{m-1} \beta ZV - \sigma^2 \sum_{m=2}^n (m-1) Z^{m-2} VZ,
\end{align*}
where the last term comes from the fact that the covariance of $Z$ and $V$ is $-\sigma^2 VZ$.
Collecting the terms with $\beta$, and changing variables $k=m-1$ in the second sum  we get
$$
= \beta V \left[ \sum_{m=2}^n (m-1) Z^m - \sum_{k=1}^{n-1} k Z^k + \sum_{m=1}^n Z^m \right].
$$
Adding the third sum to the first
\begin{align*}
&= \beta V \left[ \sum_{m=1}^n m Z^m - \sum_{k=1}^{n-1} k Z^k \right] \\
&= n\beta V Z^n = n\beta \left( \sum_{m=1}^{n+1} Z^{m-1} V - \sum_{m=1}^{n} Z^{m-1} V \right)
\end{align*} 
which corresponds to jumps from $n$ to $n+1$ at rate $\beta n$.
Collecting the terms with $\sigma^2$ and changing variables to have $Z^{k-1}$, we get
$$
= \sigma^2 V \left[ \sum_{k=2}^{n-1} \frac{k(k-1)}{2} Z^{k-1} - \sum_{k=3}^n \frac{(k-1)(k-2)}{2} Z^{k-1} - \sum_{k=2}^n (k-1) Z^{k-1} \right].
$$
Moving terms $k=2$ to $n-1$ from the last sum into the first one
$$
= \sigma^2 V \left[ \sum_{k=2}^{n-1} \frac{(k-2)(k-1)}{2} Z^{k-1} - \sum_{k=3}^n \frac{(k-1)(k-2)}{2} Z^{k-1} - (n-1) Z^{n-1} \right].
$$
The $k=2$ term in the first sum vanishes. Terms 3 to $n-1$ in the first sum cancel with 
the second sum leaving
$$
= - \sigma^2 V \frac{n(n-1)}{2} Z^{n-1} = \sigma^2 \frac{n(n-1)}{2} \left( \sum_{m=1}^{n-1} Z^{m-1} V - \sum_{m=1}^{n} Z^{m-1} V \right)
$$
which corresponds to jumps $n$ to $n-1$ at rate $\sigma^2 n(n-1)/2$.

Combining our calculations
$$
\frac{d}{dt} EY_n = \sum_m Q_{m,n} EY_n.
$$
where $Q_{m,n}$ is given in \eqref{Qmatrix}. Using Lemma \ref{mart} again,
$$
E\left( \sum_{m=1}^{N(0)} Z^{m-1}(T) V(T) \right) =  E\left( \sum_{m=1}^{N(T)} Z^{m-1}(0) V(0) \right).
$$

\subsection{Duality for the Wright-Fisher SPDE}

We begin by proving the duality result for the equation for single SPDE \eqref{uSPDE}. This is a known result due to Shiga \cite{Shiga88}.
However, he did not give many details and we need to generalize his result to our coupled SPDE, 
so we will follow the approach of Athreya and Tribe  \cite{AthTri}.  
Let $z = 1-u$. Define $\bar z_t(x) = \int z_t(y) p^\ep(y-x) \, dy$ where $p^\ep$ is the normal density with mean 0
and variance $\ep$. Noting that $x \to \bar z_t(x)$ is smooth and
using the weak formulation of \eqref{uSPDE} with test function $\phi^{\ep,x}(y)= p^\ep(y-x)$, we have
\begin{align}
\bar z_t(x) - \bar z_0(x) & = \int_0^t \alpha \Delta \bar z_s(x) \, ds 
\nonumber\\
& - \theta\beta \int_0^t \int 2z_s(y)(1-z_s(y)) p^\ep(y-x) \, dy \,ds
\label{zmart}\\
& + \int_0^t \int \sqrt{4\gamma z_s(y)(1-z_s(y)) } p^\ep(y-x) \, dW.
\nonumber
\end{align}

Using It\^o's formula (each $\bar z_t(x)$ is a semi-martingale so this is legitimate)
and writing $\mathcal{L}_z$ for the generator of $(\bar z_t(x_1), \ldots, \bar z(x_n))$  with $x_1, \ldots x_n$ fixed, we see that (ignoring the martingale terms)
\begin{align}
&\hbox{drift}\left( \mathcal{L}_{\bar z} \prod_i \bar z_t(x_i) \right) = 
\sum_{i=1}^n \prod_{j \neq i} \bar z_t(x_j) \alpha \Delta \bar z_t(x_i) 
\nonumber\\
& \qquad +  2\theta\beta \sum_{i=1}^n \prod_{j \neq i} \bar z_t(x_j) \int [z_t^2(y) - z_t(y) ] p^\ep(y-x_i) \, dy 
\label{zdual}\\
& \qquad + 4\gamma \sum_{i=1}^{n-1} \sum_{j=i+1}^n \prod_{k \neq i,j} \bar z_t(x_k) 
\int [ z_t(y)(1 - z_t(y)) ] p^\ep(y-x_i) p^\ep(y-x_j) \, dy.
\nonumber
\end{align}
The dual process is a system of branching coalescing Brownian particles. During their lifetime the
particles are Brownian motions run at rate $2\alpha$ with each giving birth at rate $2\theta\beta$.
In addition, for $i < j$, particle $j$ is killed by particle $i$ at rate $4\gamma L^{i,j}_t$ where 
$L^{i,j}_t$ denotes the local time of the process $x_j-x_i$ at $0$.
Writing $\mathcal{L}_x$ for the generator, we have 
\begin{align}
&\hbox{drift}\left(\mathcal{L}_x \prod_i \bar z_t(x_i) \right) = 
\sum_{i=1}^n \prod_{j \neq i} \bar z_t(x_j) \alpha \Delta \bar z_t(x_i) 
\nonumber\\
& \qquad + 2\theta\beta\sum_{i=1}^n  \prod_{j \neq i} \bar z_t(x_j) \cdot [\bar z_t^2(x_i) - \bar z_t(x_i) ] 
\label{zdual2} \\
& \qquad + 4\gamma \sum_{i=1}^{n-1} \sum_{j= i+1}^n  \prod_{k \neq i,j} \bar z_t(x_k) \cdot 
[\bar z_t(x_i)(1- \bar z_t(x_j)) ]\,\delta_{\{x_j=x_i\}} \nonumber
\end{align}
in which we used the formal notation $dL^{i,j}_t=\delta_{\{x_j(t)=x_i(t)\}}\,dt$. The precise meaning of 
the last term involves integration w.r.t. local times and is explained in \eqref{3rdLT0}.

We now follow Proposition 1 in \cite{AthTri} to use the duality method encapsulated in Theorem 4.4.11 of Ethier and Kurtz \cite{EK86}. In their notation
$\alpha=\beta=0$.
$$
F(\bar z, x) = \prod_{i=1}^{n} \bar z(x_i) \quad\text{if }x=(x_1,\cdots,\,x_n).
$$
They suppose
$
F(\bar z_t, x ) - \int_0^t G(\bar z_s, x) \, ds\,$ and $\,F(\bar z, x(t) ) - \int_0^t H(\bar z, x(s)) \, ds
$
are martingales and conclude that for $t\geq 0$,
\begin{equation}\label{EKdual}
	\E F(\bar z_t, x(0)) - \E F(\bar z_0, x(t)) = \E \int_0^t G(\bar z_{t-s}, x(s)) - H(\bar z_{t-s}, x(s)) \, ds.
\end{equation}
In our situation, \eqref{EKdual} holds with  $G(\bar z, x)=\mathcal{L}_{\bar z}F(\bar z, x)$ and $H(\bar z, x)=\mathcal{L}_{x}F(\bar z, x)$.

By the continuity of $x\mapsto z_t(x)$, we have $F(\bar z_t, x(0)) \to F(z_t, x(0))$ and  $F(\bar z_0, x(t)) \to F(z_0, x(t))$ a.s. as $\ep\to 0$,
so using the bounded convergence theorem, 
$$
\E F(\bar z_t, x(0)) - \E F(\bar z_0, x(t)) \to  \E F(z_t, x(0)) - \E F(z_0, x(t)).
$$
To prove the desired duality formula
\begin{equation}\label{WFdual}
\E \prod_{i=1}^{n(0)} z_t(x_i(0)) =  \E \prod_{i=1}^{n(t)} z_0(x_i(t)),\quad t\geq 0,
\end{equation}
it remains to argue that the RHS of \eqref{EKdual} tends to zero as $\epsilon\to 0$.

The first term in \eqref{zdual} agrees with that of \eqref{zdual2}. For the second terms, omitting $2\theta\beta$, the integrand of the difference is
\beq
\E\int_0^t\sum_{i=1}^{n(s)} \prod_{j \neq i} \bar z_{t-s}(x_j(s)) \Big[\int z_{t-s}^2(y) p^\ep(y-x_i(s))\,dy-\bar z_{t-s}^2(x_i(s))\Big] \,ds \to 0
\label{2ndLT}
\eeq
a.s. for $s\in (0,t)$, by continuity of $y\mapsto z_s(y)$ and dominated convergence. The contribution to \eqref{EKdual} from the third term of \eqref{zdual2} is (omitting $4\gamma$)
\begin{equation}\label{3rdLT0}
\E\int_0^t\sum_{i=1}^{n(s)-1} \sum_{j= i+1}^{n(s)}  \prod_{k \neq i,j}  \bar z_{t-s}(x_k(s)) \cdot 
[\bar z_{t-s}(x_i(s))\big(1-  \bar z_{t-s}(x_j(s))\big) ]\,dL^{i,j}_s
\end{equation}
which converges, by dominated convergence to
\begin{equation}\label{3rdLT}
\E\int_0^t\sum_{i=1}^{n(s)-1} \sum_{j= i+1}^{n(s)}  \prod_{k \neq i,j}  z_{t-s}(x_k(s)) \cdot 
[ z_{t-s}(x_i(s))\big(1-  z_{t-s}(x_j(s))\big) ]\,dL^{i,j}_s.
\end{equation}
Finally, we consider the contribution to \eqref{EKdual} from the third term of \eqref{zdual}. 
After the substitution $y\mapsto y+ x_i(s)$, we have (omitting $4\gamma$)
\begin{equation}\label{3wY}
\E \int_0^t \sum_{i=1}^{n(s)-1} \sum_{j=i+1}^{n(s)} 
\int  p^\ep(y) p^\ep(y+x_i(s)-x_j(s))\,Y^{i,j}_{s,t}(y) \, dy \,ds,
\end{equation}
where for any $i,\,j$,
$$
Y^{i,j}_{s,t}(y)= \prod_{k \neq i,j} \bar z_{t-s}(x_k(s))\cdot [ z_{t-s}(y+ x_i(s))(1 - z_{t-s}(y+ x_i(s))) ].
$$
At this point we would like to apply Lemma 2 of \cite{AthTri},  to obtain 
\begin{equation}\label{Occup}
\int_0^t p^\ep(y+x_i(s)-x_j(s))\,Y^{i,j}_{s,t}(y)\,ds = \int\int_0^tp^\ep(y+z)\,Y^{i,j}_{s,t}(y)\,dL^{i,j,z}_{s}\,dz,
\end{equation}
where  $L^{i,j,z}_{t}$ denotes the local time of the process $x_j-x_i$ at $z$. Their formula asssumes $Y$ is predictable,
so we substitute $s-$ for $s$ and note that this does not change the integral in \eqref{3wY}.

Putting \eqref{Occup} into \eqref{3wY}, then using the continuity of the local time and that of $Y^{i,j}_{s,t}(y)$ (see details in pages 1724--1725 of \cite{AthTri}), the integrand of \eqref{3wY} converges a.s.\ to the integrand of \eqref{3rdLT}. Convergence of expectations then follows from dominated convergence and the proof of \eqref{WFdual} is complete.

\subsection{Duality for the coupled SPDE}

To prove the duality formula for the coupled SPDE, we order the particles in $x_i(t)$, $i \le n(t)$ in such a way that (i) when two particles coalesce we keep the smaller index and (ii) immediately after particle $i$ gives birth, its offspring has index $i$ and all particles with index $\ge i$ (including the one that just gave birth) increase their index by 1. Adding the number of
particles in the dual as another variable to help clarify things,  for $k\geq 1$ we let
 \begin{equation}
 F_k( (z,\ell), (x,n) ) =\sum_{1\leq j_1<j_2<\cdots<j_k\leq n}\ell(x_{j_k})\cdots\ell(x_{j_1})\prod_{i=1}^{j_1-1} z(x_i).
 \end{equation}
The following duality formula is motivated by \eqref{ldual}: 
\begin{equation}\label{Coupledual}
\E F_k((z_t,\ell_t),(x_0,n_0)) = \E F_k((z_0,\ell_0),(x_t,n_t)), \quad k\geq 1.
\end{equation}

Once we have shown \eqref{Coupledual}, the uniqueness claimed in Lemma \ref{SPDEunique} follows, since it allows us to 
conclude that
the distribution at a fixed time is unique by identifying all cross moments of the bounded functions $u$ and $\ell$.  Uniqueness of the law of the process $(u,\ell)$ then follows from Theorem 4.2 in Chapter 4 of Ethier and Kurtz \cite{EK86}. The duality function \eqref{WFdual} for the Wright-Fisher SPDE can be viewed as the case $k=0$.  We shall provide the details of proof of \eqref{Coupledual} for the case $k=1$, the other cases are similar.

We let $\bar \ell_t(x) = \int \ell_t(y) p^\ep(y-x) \, dy$ and note
\begin{align*}
\bar z_t(x) - \bar z_0(x) & = \int_0^t \alpha \Delta \bar z_s(x) \, ds \\
& - 2\theta\beta \int_0^t \int z_s(y)(1-z_s(y)) p^\ep(y-x) \, dy\,ds \\
& - \int_0^t \int \sqrt{4\gamma z_s(y)\ell_s(y) }\, p^\ep(y-x) dW^0 \\
& - \int_0^t \int \sqrt{4\gamma z_s(y)(1-z_s(y)-\ell_s(y)) }\, p^\ep(y-x) dW \\
\bar \ell_t(x) - \bar \ell_0(x) & = \int_0^t \alpha \Delta \bar \ell_s(x) \, ds \\
& + 2\theta\beta \int_0^t \int z_s(y)\ell_s(y) p^\ep(y-x) \, dy\,ds \\
& + \int_0^t \int \sqrt{4\gamma z_s(y)\ell_s(y) }\, p^\ep(y-x) dW^0 \\
& + \int_0^t \int \sqrt{4\gamma \ell_s(y)(1-z_s(y)-\ell_s(y)) }\, p^\ep(y-x) dW^2.
\end{align*}
Writing $\mathcal{L}_{\bar z,\bar\ell}$ for the generator of $(\bar z_t(x_1), \ldots, \bar z(x_{k-1}), \bar\ell(x_k))$  
with $x_1, \ldots x_n$ and $n$ fixed, {drift}$[\mathcal{L}_{\bar z,\bar\ell} F_1( (\bar z_t,\bar \ell_t), (x,n) )]$
\begin{align}
&= \sum_{k=2}^n \sum_{i=1}^{k-1} \prod_{1\le j \le k \atop j \neq i} \bar z_t(x_j)\cdot \alpha \Delta \bar z_t(x_i)\cdot \bar\ell_t(x_k)
+  \prod_{j=1}^{k-1} \bar z_t(x_j) \alpha \Delta \bar\ell_t(x_k) \label{f1}\\
& + \sum_{k=2}^n 2\theta\beta \sum_{i=1}^{k-1} \prod_{j \neq i} \bar z_t(x_j)\cdot \bar\ell_t(x_k)  \int [z_t^2(y) - z_t(y) ] p^\ep(y-x_i) \, dy 
\label{f2}\\
& + \sum_{k=1}^n 2\theta\beta \prod_{j=1}^{k-1} \bar z_t(x_j)\cdot \int  z_t(y) \ell_t(y) p^\ep(y-x_k) \, dy 
\label{f3}\\
& + \sum_{k=3}^n 4\gamma \sum_{j=2}^{k-1} \sum_{i=1}^{j-1} \prod_{1\le h \le k-1 \atop h \neq i,j} \bar z_t(x_h) \cdot \bar\ell_t(x_k) 
\label{f4} \\
&\hphantom{MMMMMMMMM}
\int [ z_t(y)(1 - z_t(y)) ] p^\ep(y-x_i) p^\ep(y-x_j) \, dy  \nonumber\\
& - \sum_{k=2}^n 4\gamma \sum_{i=1}^{k-1}  \prod_{1\le h \le k-1 \atop h \neq i} \bar z_t(x_h) 
\int [ z_t(y)\ell_t(y) ] p^\ep(y-x_i) p^\ep(y-x_k) \, dy. \label{f5}
\end{align}

Writing $\mathcal{L}_{x,n}$ for the generator of the dual ordered  particle system, we want to compute
\begin{equation}\label{driftDual}
\hbox{drift} [ \mathcal{L}_{x,n} F_1( (\bar z,\bar \ell), (x,n)) ].
\end{equation}

and to show, as in \eqref{EKdual}, that
\beq
\E \int_0^t \mathcal{L}_{\bar z,\bar\ell} F_1( (\bar z_{t-s},\bar \ell_{t-s}), (x(s),n(s))) - \mathcal{L}_{x,n}  F_1( (\bar z_{t-s},\bar \ell_{t-s}), (x(s),n(s))) \, ds \to 0.
\label{goal}
\eeq
The terms coming from particle Brownian motions are
\beq
\sum_{k=1}^n \left( 
\sum_{i=1}^{k-1} \prod_{1\le j \le k-1 \atop j \neq i} \bar z_t(x_j) \alpha \Delta \bar z_t(x_i) \cdot \bar\ell_t(x_k) 
+ \prod_{i=1}^{k-1} \bar z_t(x_i) \cdot \Delta \bar\ell_t(x_k)\right)
\label{llap}
\eeq
which agree with \eqref{f1}, and hence cancel in \eqref{goal}.

{\bf Birth terms.} Given a vector $x = (x_1, \ldots x_n)$, let $x^i = (x_1, \ldots x_i, x_i, \ldots x_n)$ be the vector of length $n+1$
with the $i$ coordinate duplicated.
The total change in the drift \eqref{driftDual} due to births is (omitting the $2\theta\beta$)
\beq
\sum_{i=1}^n \left[ \sum_{k=i+1}^{n+1} \prod_{j=1}^{k-1} \bar z_t(x^i_j) \ell_t(x^i_k) 
-  \sum_{k=i+1}^{n} \prod_{j=1}^{k-1} \bar z_t(x_j) \ell_t(x_k) \right].
\label{dupe3}
\eeq
Here $i$ is the location of the duplication and there is no change in the drift for terms with $k\le i$. The difference
\beq
\sum_{k=i+2}^{n+1} - \sum_{k=i+1}^n = [ \bar z_t(x_i) - 1 ]  \sum_{k=i+1}^{n} \prod_{j=1}^{k-1} \bar z_t(x_j) \ell_t(x_k)
\label{Lxd1}
\eeq
since when $k \ge i+2$, we have $x^i_{k+1} = x_k$ and $\bar z_t(x_i)$ appears twice in the product. After we interchange the order of the summation this 
agrees with \eqref{f2} if we replace $p^\ep(y-x_i)$ in that formula by a pointmass at $x_i$. The remaining term ($k=i+1$) in the first sum in \eqref{dupe3} is
\beq
\sum_{i=1}^n \prod_{j=1}^{i} \bar z_t(x_j) \ell_t(x_i)
\label{Lxd2}
\eeq
since $x^i_i = x^i_{i+1} = x_i$. This agrees with \eqref{f3} if we again replace $p^\ep(y-x_i)$ by a pointmass at $x_i$. 
As we argued in \eqref{2ndLT} it follows that
$$
\E \int_0^t  \eqref{f2} - \eqref{Lxd1} + \eqref{f3} - \eqref{Lxd2} \, ds \to 0
$$

\ms
{\bf Killing terms.} Given a vector $x = (x_1, \ldots x_n)$ let $\hat x^j = (x_1, \ldots x_{j-1}, x_{j+1}, \ldots x_n)$ be the vector of length $n-1$
with the $j$ coordinate removed. The total change in the drift \eqref{driftDual} due to deaths is (omitting the $4\gamma$)
\beq
\sum_{j=2}^{n} \sum_{i=1}^{j-1} \left[ \sum_{k=j}^{n-1} \prod_{h=1}^{k-1} \bar z_t(\hat x^j_h) \ell_t(\hat x^j_k) 
-  \sum_{k=j}^{n} \prod_{h=1}^{k-1} \bar z_t(x_h) \ell_t(x_k) \right]\,\delta_{\{x_j=x_i\}}.
\label{kill3}
\eeq
Here $j$ is the location of the deletion and there is no change in the drift for terms with $k< j$. When $j \le k$, $\hat x^j_k = x_{k+1}$
so we have
$$
\sum_{k=j}^{n-1} - \sum_{k=j+1}^n 
= \sum_{k=j+1}^n \prod_{1\le h \le k-1 \atop h \neq j} \bar z_t(x_h) [ 1 - \bar z_t(x_j) ] \bar \ell_t(x_k).
$$

The remaining term coming from $k=j$ in the second sum is
$$
- \prod_{h=1}^{j-1} \bar z_t(x_h)  \bar\ell_t(x_j).
$$
Using these results in \eqref{kill3} and noting that in the first case $j=n$ is impossible, we have
\begin{align}
&\sum_{j=2}^{n-1} \sum_{i=1}^{j-1} \sum_{k=j+1}^n \prod_{1\le h \le k-1 \atop h \neq i,j} 
\bar z_t(x_h) \cdot \bar \ell_t(x_k) \cdot \bar z_t(x_i)[ 1 - \bar z_t(x_j) ]  \,\delta_{\{x_j=x_i\}}
\label{kill4}\\
&- \sum_{j=2}^{n} \sum_{i=1}^{j-1} \prod_{1 \le h \le j-1, h \neq i} \bar z_t(x_h) \cdot \bar z_t(x_i) \bar\ell_t(x_j) \,\delta_{\{x_j=x_i\}}.
\nonumber
\end{align}
where in the second sum we have changed $j$ to $k$, and in both terms we have removed an additional term from the product over $h$. On other hand,
interchanging the order of summation in \eqref{f4}
$$
\sum_{k=3}^n \sum_{j=2}^{k-1} \sum_{i=1}^{j-1}  = \sum_{j=2}^{n-1} \sum_{k=j+1}^n \sum_{i=1}^{j-1} = \sum_{j=2}^{n-1} \sum_{i=1}^{j-1} \sum_{k=j+1}^n 
$$
Hence formulas \eqref{f4} and \eqref{f5} become 
\begin{align}
\sum_{j=2}^{n-1} \sum_{i=1}^{j-1} \sum_{k=j+1}^n \prod_{1\le h \le k-1 \atop h \neq i,j}&  \bar z_t(x_h) \cdot \bar\ell_t(x_k) 
\nonumber\\
& \int [ z_t(y)(1 - z_t(y)) ] p^\ep(y-x_i) p^\ep(y-x_j) \, dy\label{f45}  \\
 - \sum_{k=2}^n 4\gamma \sum_{i=1}^{k-1}  \prod_{1\le h \le k-1 \atop h \neq i}& \bar z_t(x_h) 
\int [ z_t(y)\ell_t(y) ] p^\ep(y-x_i) p^\ep(y-x_k) \, dy. 
\nonumber
\end{align}
Arguing as in \eqref{3rdLT0}--\eqref{Occup} now completes the proof of \eqref{goal}. The proof of  \eqref{Coupledual} is complete, and hence that of Lemma \ref{SPDEunique}.

\clearp

\end{document}